\newcommand{\Spec}{\operatorname{Spec}}
\newcommand{\Max}{\operatorname{Max}}
\newcommand{\BL}{\operatorname{BL}}
\newcommand{\MTL}{\operatorname{MTL}}
\newcommand{\NM}{\operatorname{NM}}
\newcommand{\MV}{\operatorname{MV}}
\newcommand{\local}{\operatorname{local}}
\begin{document}
\begin{frontmatter}
  \title{The Sheaf Representation of Residuated Lattices} 
  \author{Huarong Zhang\thanksref{a}\thanksref{ALL}\thanksref{myemail}}	
   \author{Dongsheng Zhao\thanksref{b}\thanksref{coemail}}		
   \address[a]{College of Sciences\\ China Jiliang University\\				
    Hangzhou, China}  							
  \thanks[ALL]{I would like to express my sincere thanks to Professor Dongsheng Zhao who hosted my visit to National Institute of Education, Nanyang Technological University. The authors  would like to thank the anonymous reviewers for their professinal comments that have improved this paper substantially. This work is supported by the National Natural Science Foundation of China (No. 11701540, 12171445)}   
   \thanks[myemail]{Email: \href{mailto:hrzhang2008@cjlu.edu.cn} {\texttt{\normalshape hrzhang2008@cjlu.edu.cn}}} 
  \address[b]{Mathematics and Mathematics Education\\National Institute of Education\\Nanyang Techonological University\\ Singapore 637616} 
  \thanks[coemail]{Email: \href{mailto: dongsheng.zhao@nie.edu.sg} {\texttt{\normalshape dongsheng.zhao@nie.edu.sg}}}
\begin{abstract} 
 The residuated lattices form one of the most important algebras of fuzzy logics and have been heavily studied by people from various different points of view. Sheaf presentations provide a topological approach to many algebraic structures.
 In this paper, we study the topological properties of prime spectrum of residuated lattices, and then construct a sheaf space to obtain a sheaf representation for each residuated lattice.
\end{abstract}
\begin{keyword}
 Residuated lattice, sheaf representation, prime spectrum.
\end{keyword}
\end{frontmatter}
\section{Introduction}\label{intro}
Residuated lattices, introduced by Ward and Dilworth \cite{wd}, constitute
the semantics of H\"{o}hle's Monoidal Logic \cite{hk}. Such algebras provide the fundamental framework for algebras of logics. Many familiar algebras, such as Boolean algebras, $\MV$-algebras, $\BL$-algebras, $\MTL$-algebras, $\NM$ algebras ($R_0$-algebras) and Heyting algebras, are special types of residuated lattices.

In dealing with certain type of problems, sheaf representations of algebras often provide powerful tools as they convert the study of algebras to the study of stalks, a topological structure. Thus, in the past decades, sheaf spaces \cite{d,mm,te} have been constructed for various types of algebras to obtain their corresponding sheaf representations.

In the case of algebras for fuzzy logics, Ghilardi and Zawadowski constructed the Grothendieck-type duality and got the sheaf representation for Heyting algebras \cite{gh}. Many scholars investigated the sheaf representations of $\MV$-algebras \cite{ne,dp,fl,fg,mg,gg}. Here we give an outline of their differences. In \cite{dp}, Dubuc and Poveda use $\Spec(L)$ (which is endowed with the co-Zariski topology)
as the base space and $\MV$-chains as the stalks. In \cite{fg}, Filipoiu and Georgescu used $\Max(L)$ (which is endowed with  the Zariski topology) as the base space and $\local \MV$-algebras as the stalks. Di Nola, Esposito and Gerla \cite{ne} improved the methods of \cite{fg}, by choosing the stalks from given classes of $\local\MV$-algebras.  Ferraioli and Lettieri \cite{fl}, combining the techniques in \cite{dp} and \cite{fg}, got two types of sheaf representation of $\MV$-algebras. In \cite{mg} and \cite{gg}, Gehrke, Gool and Marra provided a general framework for previously known results on sheaf representations of $\MV$-algebras as \cite{dp} and \cite{fg}, through the lens of Stone-Priestley duality, using canonical extensions as an essential tool. 
In terms of the sheaf representations of $\BL$-algebras, Di Nola and Leu\c{s}tean, who adopted $\Spec(L)(\Max(L))$ as the base space and $\BL$-algebras ($\local \BL$-algebras) as the stalks, obtained the sheaf representation and the compact representation of $\BL$-algebras \cite{ne,l}. 

In \cite{mg}, Gehrke and Gool dealt with sheaf representations of a $\mathcal{V}$-algebra. In their definition  Def~3.1 \cite{mg}, they required that the $\mathcal{V}$-algebra $A$ is isomorphic to $FY$, where $F$ is a sheaf, and $FY$ is the algebra of global sections of $F$. In this paper, we loosen the  isomorphism condition in \cite{mg}
and the requirement on the stalks in \cite{fl} and further extend above results to the more general structures; namely, define the sheaf spaces of residuated lattices and obtain the sheaf representation of residuated lattices.


\section{Preliminaries}

In this section, we recall some basic notions and results to be used in the sequel.

\begin{definition}(\cite{wd}) A residuated lattice is an algebra $(L, \wedge, \vee, \otimes, \rightarrow, 0,1)$ satisfying the following conditions:
\begin{enumerate}	
	\item[(1)] $(L, \wedge, \vee, 0,1)$ is a bounded lattice;
	
	\item[(2)] $(L, \otimes,1)$ is a commutative monoid with identity 1;
	
	\item[(3)]  for any $x, y, z\in L$, $x\otimes y \leq z$ iff $x \leq y \rightarrow z$.
	\end{enumerate}
\end{definition}
\setlength{\parskip}{0.3\baselineskip}

In the following, we shall use the shorthand $L$ to denote $(L, \wedge, \vee, \otimes, \rightarrow, 0,1)$.

\begin{definition}(\cite{h,tu})  A nonempty subset $F$ of a residuated lattice $L$ is called a filter  if
	\begin{enumerate}
	\item[(1)] $x\in F$ and $x\leq y$ imply $y\in F$;
	
	\item[(2)] $x\in F$ and $y\in F$ imply $x\otimes y\in F$.
	\end{enumerate}
\end{definition}

\begin{theorem} (\cite{h,tu})  A nonempty subset $F$ of a residuated lattice $L$ is a filter if
\begin{enumerate}	
\item[(1)] $1\in F$;

\item[(2)] $x\in F$ and $x\rightarrow y\in F$ imply $y\in F$.
\end{enumerate}
\end{theorem}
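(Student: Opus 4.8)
The plan is to deduce the two defining clauses of a filter (upward closure and closure under $\otimes$) from hypotheses (1) and (2), using only the residuation adjunction (3). First I would record two elementary facts valid for all $a,b\in L$: (i) $a\leq b$ implies $a\rightarrow b=1$, because $1$ is the lattice top and the adjunction gives $1\leq a\rightarrow b \iff 1\otimes a\leq b \iff a\leq b$, while $a\rightarrow b\leq 1$ always holds; and (ii) $a\leq b\rightarrow(a\otimes b)$, which is exactly the instance of (3) applied to the trivial inequality $a\otimes b\leq a\otimes b$. Neither fact uses anything beyond (3) and the fact that $1$ is simultaneously the monoid unit and the lattice top.

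Granting (i) and (ii), the proof splits into two parts. For \emph{upward closure}: if $x\in F$ and $x\leq y$, then by (i) we have $x\rightarrow y=1$, and $1\in F$ by clause (1); hence $x\in F$ and $x\rightarrow y\in F$, so clause (2) yields $y\in F$. For \emph{closure under $\otimes$}: if $x,y\in F$, then by (ii) we have $x\leq y\rightarrow(x\otimes y)$, so the upward closure just established gives $y\rightarrow(x\otimes y)\in F$; now $y\in F$ and $y\rightarrow(x\otimes y)\in F$, so clause (2) gives $x\otimes y\in F$. This shows $F$ is a filter in the sense of the earlier definition.

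For completeness I would also verify the converse, so as to obtain the full equivalence with Definition~2: if $F$ is a filter then (1) holds because $F$ is nonempty and upward closed while $1$ is the top, and (2) holds because $x\in F$ and $x\rightarrow y\in F$ give $x\otimes(x\rightarrow y)\in F$ by closure under $\otimes$, and $x\otimes(x\rightarrow y)\leq y$ — the ``modus ponens'' inequality, obtained by applying (3) to $x\rightarrow y\leq x\rightarrow y$ and using commutativity of $\otimes$ — so $y\in F$ by upward closure.

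I do not expect a genuine obstacle here; the statement is a short unwinding of the adjunction (3). The only point worth stating carefully is exactly which instances of (3) are being invoked: once the inequalities in (i), (ii), and the modus ponens inequality are on the table, every remaining step is a one-line application of clause (1) or clause (2).
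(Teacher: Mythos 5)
Your proof is correct, and it is the standard argument: the paper itself gives no proof of this statement, citing it to H\'ajek and Turunen, and your unwinding of the residuation adjunction (the facts $a\leq b\Rightarrow a\rightarrow b=1$, $a\leq b\rightarrow(a\otimes b)$, and $x\otimes(x\rightarrow y)\leq y$) is exactly the expected route. Note only that the theorem as stated asserts just the sufficiency of (1)--(2) for being a filter, so your converse paragraph is extra --- harmless, and it correctly establishes the full equivalence.
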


\begin{remark} A filter is \emph{proper} if $F\neq L$. We will use $\mathcal{F}(L)$ to denote the set of all filters of a residuated lattice $L$.
Note that $\{1\}$ and $L$ are filters. For any  $X\subseteq L$, the filter of $L$ generated by $X$ (the smallest filter containing $X$) will be denoted by 
\textless 
$X$\textgreater 
.
In particular, the filter generated by $\{a\}$ will be denoted by 
\textless 
$a$\textgreater 
. 
To each filter $F$, we can
associate a congruence relation $\equiv_{F}$ on $L$ given by 
\begin{center}
$x\equiv_{F} y$ iff $((x\rightarrow y) \otimes (y\rightarrow x))\in F.$
\end{center}
Let $L/F$ denote the set of the congruence
classes of $\equiv_{F}$, i.e., $L/F=\{x/F|x\in L\}$, where

\noindent $x/F:=\{y\in L|y \equiv_{F} x\}.$ Define operations on $L/F$ as
follows:
\begin{center}
$x/F \sqcap y/F=(x\wedge y)/F$, $x/F \sqcup y/F =(x\vee y)/F$,

$x/F \odot y/F =(x\otimes y)/F$, $x/F \rightharpoonup y/F =(x\rightarrow y)/F$.
\end{center}
\end{remark}

\begin{remark} (\cite{gi,go}) It is easy to show that if $\{F_{i}:i\in I\}\subseteq \mathcal{F}(L)$ is a directed family ($\forall i_{1}, i_{2}\in I, \exists i_{s}\in I$ such that $F_{i_{1}}\subseteq F_{i_{s}}$ and $F_{i_{2}}\subseteq F_{i_{s}}$), then $\bigcup_{i\in I} F_{i}\in \mathcal{F}(L)$. Thus, for any $x\in L,$ \textless 
	$x$\textgreater 
	$\ll$\textless 
	$x$\textgreater 
~holds in the complete lattice $(\mathcal{F}(L), \subseteq)$ (see \cite{gi,go} for the definition of the way below relation $\ll$). It's not clear whether $F\in \mathcal{F}(L)$
and $F\ll F$ implies $F=$
\textless 
$x$\textgreater 
~for some $x\in L$.
 \end{remark}

\begin{lemma}(\cite{cj}) Let $F$ be a filter of a residuated lattice $L$. Then
	($L/F,\sqcap,\sqcup,\odot,\rightharpoonup,0/F,1/F)$ is a
	residuated lattice.
\end{lemma}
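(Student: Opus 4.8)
The plan is to treat the canonical surjection $\pi\colon L\to L/F$, $x\mapsto x/F$, as a homomorphism onto an algebra of the correct signature and then transport the residuated lattice identities from $L$ down to $L/F$. I would proceed in three stages: (i) confirm that $\equiv_F$ is a congruence, so that $\sqcap,\sqcup,\odot,\rightharpoonup$ are well defined on $L/F$; (ii) identify the induced order $a/F\le b/F\iff a\rightarrow b\in F$ and check the bounded-lattice and commutative-monoid axioms; (iii) verify the residuation adjunction.

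For stage (i) one may invoke the assertion in the Remark that $\equiv_F$ is a congruence, but it is worth recording why. Reflexivity comes from $x\rightarrow x=1\in F$; symmetry is immediate since $(x\rightarrow y)\otimes(y\rightarrow x)=(y\rightarrow x)\otimes(x\rightarrow y)$; transitivity uses the standard inequality $(x\rightarrow y)\otimes(y\rightarrow z)\le x\rightarrow z$ together with closure of $F$ under $\otimes$ and upward closure. For compatibility one uses the inequalities
\begin{center}
$(x\rightarrow x')\otimes(y\rightarrow y')\le (x\otimes y)\rightarrow(x'\otimes y')$, \qquad $(x\rightarrow x')\otimes(y\rightarrow y')\le (x\wedge y)\rightarrow(x'\wedge y')$,
\end{center}
its $\vee$-analogue, and $(x'\rightarrow x)\otimes(y\rightarrow y')\le (x\rightarrow y)\rightarrow(x'\rightarrow y')$; combined with the observation that $a\otimes b\in F$ forces $a,b\in F$ (because $a\otimes b\le a$ and $a\otimes b\le b$), these yield $x*y\equiv_F x'*y'$ from $x\equiv_F x'$ and $y\equiv_F y'$ for each of the four operations.

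For stage (ii), a short computation shows that $a/F\sqcap b/F=a/F$ is equivalent to $a\rightarrow b\in F$ (using $a\rightarrow(a\wedge b)=(a\rightarrow a)\wedge(a\rightarrow b)=a\rightarrow b$), so the lattice order on $L/F$ is exactly $a/F\le b/F\iff a\rightarrow b\in F$, and $\pi$ is order preserving and surjective. The lattice laws, commutativity, associativity and unitality of $\odot$, and the inequalities $0/F\le x/F\le 1/F$ then all follow by applying $\pi$ to the corresponding facts in $L$. The only genuinely new point, stage (iii), is that $x/F\odot y/F\le z/F$ unwinds to $(x\otimes y)\rightarrow z\in F$ whereas $x/F\le y/F\rightharpoonup z/F$ unwinds to $x\rightarrow(y\rightarrow z)\in F$, and these two conditions agree because the currying identity $(x\otimes y)\rightarrow z=x\rightarrow(y\rightarrow z)$ holds in every residuated lattice.

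I expect the main obstacle to be the congruence verification for $\rightarrow$: since $\rightarrow$ is antitone in its first argument, one must use $x'\rightarrow x$ rather than $x\rightarrow x'$ there, and prove the relevant inequality by two applications of residuation followed by a chain of the evaluation inequalities $a\otimes(a\rightarrow b)\le b$. Pinpointing the currying identity as the fact that makes the adjunction descend to $L/F$ is the other conceptual step; beyond these, everything reduces to transporting identities of $L$ along the surjection $\pi$.
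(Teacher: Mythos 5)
Your proposal is correct: the congruence verification via the inequalities $(x\rightarrow x')\otimes(y\rightarrow y')\le (x\ast y)\rightarrow(x'\ast y')$ (with the antitone adjustment $(x'\rightarrow x)\otimes(y\rightarrow y')\le(x\rightarrow y)\rightarrow(x'\rightarrow y')$ for implication), the extraction step $a\otimes b\in F\Rightarrow a,b\in F$, the identification $a/F\le b/F\iff a\rightarrow b\in F$, and the currying identity $(x\otimes y)\rightarrow z=x\rightarrow(y\rightarrow z)$ all hold and together give exactly the standard proof. The paper itself offers no proof of this lemma (it is cited from Cretan--Jeflea \cite{cj}), so there is nothing in-paper to compare against; your argument is the expected one and is complete in outline.
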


\begin{remark}(\cite{cj}) The following properties hold on any residuated lattice:
	\begin{enumerate}
	\item[(1)] $x\vee (y\otimes z)\geq (x\vee y)\otimes (x\vee z)$;
	
	\item[(2)]  \textless 
	$x$\textgreater 
	$\cap$\textless 
	$y$\textgreater 
	 $=$
	 \textless 
	$x\vee y$\textgreater .
\end{enumerate}
\end{remark}

\begin{definition}(\cite{cj}) A proper filter $P$ of a residuated lattice $L$ is called a prime filter if for any $x,y\in L$,
	$x\vee y\in P$ implies $x\in P$ or $y\in P$.
\end{definition} 

The set of all prime filters of a residuated lattice $L$ is
called the  \emph{prime spectrum} of $L$ and is denoted by $\Spec(L)$.

\begin{lemma} Let $P$ be a proper filter of a residuated lattice $L$. Then $P$ is a prime filter iff
	$F_{1} \cap F_{2} \subseteq P$ implies $F_{1}\subseteq P$ or $F_{2} \subseteq P$ for any $F_{1}, F_{2} \in \mathcal{F}(L)$.
\end{lemma}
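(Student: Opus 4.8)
The plan is to prove both implications directly, exploiting the earlier Remark that $\langle x\rangle\cap\langle y\rangle=\langle x\vee y\rangle$ together with the fact that every filter is upward closed. Neither direction requires anything deep; the content is essentially bookkeeping about generated filters.

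For the ``if'' direction, suppose the filter-level condition holds and let $x\vee y\in P$. Apply the hypothesis to $F_1=\langle x\rangle$ and $F_2=\langle y\rangle$: by the Remark, $F_1\cap F_2=\langle x\vee y\rangle$, and since $x\vee y\in P$ and $P$ is a filter we get $\langle x\vee y\rangle\subseteq P$, i.e.\ $F_1\cap F_2\subseteq P$. The hypothesis then yields $\langle x\rangle\subseteq P$ or $\langle y\rangle\subseteq P$, hence $x\in P$ or $y\in P$. Thus $P$ is prime.

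For the ``only if'' direction, assume $P$ is prime, let $F_1,F_2\in\mathcal F(L)$ with $F_1\cap F_2\subseteq P$, and argue by contradiction: if $F_1\not\subseteq P$ and $F_2\not\subseteq P$, pick $a\in F_1\setminus P$ and $b\in F_2\setminus P$. Since $a\le a\vee b$ and $F_1$ is upward closed, $a\vee b\in F_1$; symmetrically $a\vee b\in F_2$; hence $a\vee b\in F_1\cap F_2\subseteq P$. Primeness of $P$ forces $a\in P$ or $b\in P$, contradicting the choice of $a,b$. Therefore $F_1\subseteq P$ or $F_2\subseteq P$.

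The only point that needs care — and the closest thing to an ``obstacle'' — is making sure the generated filter $\langle x\vee y\rangle$ really lies inside $P$ when $x\vee y\in P$; this is immediate because $P$ is itself a filter containing $x\vee y$ and $\langle x\vee y\rangle$ is the least such filter. Everything else is a routine use of the definitions and the cited identity for $\langle\,\cdot\,\rangle\cap\langle\,\cdot\,\rangle$.
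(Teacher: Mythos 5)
Your proof is correct and follows essentially the same route as the paper: both directions hinge on the identity $\langle x\rangle\cap\langle y\rangle=\langle x\vee y\rangle$ from Remark~2.7 and on upward closure of filters to place $a\vee b$ in $F_1\cap F_2$. The only difference is cosmetic — you argue the ``if'' direction directly where the paper phrases it as a contradiction — so the two proofs are the same in substance.
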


\begin{proof} Assume that $F_{1}\cap F_{2}\subseteq P$ with $F_{1}\nsubseteq P$ and $F_{2}\nsubseteq P$. Then
	there exist $x\in F_{1}, y\in F_{2}$ such that $x\notin P$, $y\notin P$. Thus $x\vee y\in F_{1}\cap F_{2}$ and $x\vee y\notin P$, since
	$P$ is a prime filter. This shows that $F_{1}\cap F_{2}\nsubseteq P$, a contradiction.
	Conversely, assume that $x\vee y \in P$ with $x \notin P$ and $y \notin P$. Thus \textless 
	$x$\textgreater 
	$\nsubseteq P$ and \textless 
	$y$\textgreater 
	$\nsubseteq P$. Therefore 
	\textless 
	$x$\textgreater 
	$\cap$\textless 
	$y$\textgreater 
 $\nsubseteq P$. We have 
 \textless 
 $x\vee y$\textgreater 
 $\nsubseteq P$, that is, $x\vee y \notin P$, again a contradiction.
	
	Next, for any $X\subseteq L$, we will write $D(X)=\{P\in$ Spec$(L)| X\nsubseteq P\}$.
	For any $a\in L$, $D(\{a\})$ shall be denoted simply by $D(a)$.
\end{proof} 

\begin{lemma} Let $L$ be a residuated lattice. Then
	\begin{enumerate}
	\item[(1)] $X\subseteq Y\subseteq L$ implies $D(X)\subseteq D(Y)$;
	
	\item[(2)] $D(X)=D$(\textless 
	$X$\textgreater ).
\end{enumerate}
\end{lemma}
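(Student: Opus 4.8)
The plan is to prove both parts by unwinding the definition $D(X)=\{P\in\Spec(L)\mid X\nsubseteq P\}$ and using elementary properties of filters; no deep machinery is needed here, so the proof will be short.

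For part (1), I would argue by contraposition on the membership condition. Fix $X\subseteq Y\subseteq L$ and take $P\in D(X)$, so $X\nsubseteq P$. If we had $Y\subseteq P$, then from $X\subseteq Y$ we would get $X\subseteq P$, contradicting $X\nsubseteq P$. Hence $Y\nsubseteq P$, i.e. $P\in D(Y)$. This gives $D(X)\subseteq D(Y)$.

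For part (2), one inclusion is free: since $X\subseteq\,$\textless $X$\textgreater\ by definition of the generated filter, part (1) immediately yields $D(X)\subseteq D(\,$\textless $X$\textgreater$)$. For the reverse inclusion $D(\,$\textless $X$\textgreater$)\subseteq D(X)$, I would again contrapose: suppose $P\in\Spec(L)$ with $X\subseteq P$. Since $P$ is in particular a filter of $L$ and \textless $X$\textgreater\ is the \emph{smallest} filter of $L$ containing $X$, it follows that \textless $X$\textgreater$\,\subseteq P$, so $P\notin D(\,$\textless $X$\textgreater$)$. Equivalently, $P\in D(\,$\textless $X$\textgreater$)$ forces $X\nsubseteq P$, i.e. $P\in D(X)$. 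Combining the two inclusions gives $D(X)=D(\,$\textless $X$\textgreater$)$.

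There is essentially no obstacle in this lemma; the only point that requires any care is invoking the defining universal property of the generated filter \textless $X$\textgreater\ (smallest filter containing $X$), together with the fact that every prime filter is by definition a proper filter and hence a filter, so that $X\subseteq P$ propagates to \textless $X$\textgreater$\,\subseteq P$. Everything else is routine set inclusion. This lemma will then feed into the subsequent development of the topology on $\Spec(L)$ generated by the sets $D(a)$.
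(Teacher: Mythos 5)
Your proof is correct and follows essentially the same route as the paper: part (1) by direct unwinding of the definition of $D(X)$, and part (2) by combining (1) with $X\subseteq\,$\textless $X$\textgreater\ for one inclusion and the minimality of the generated filter (contraposed) for the other. You simply spell out the minimality argument more explicitly than the paper does.
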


\begin{proof} (1) is trivial.
	
	(2) Since $X\subseteq$
	 	\textless 
	 $X$\textgreater 
	 , from (1), we have that $D(X)\subseteq D$(\textless 
	$X$\textgreater 
	 ). Conversely, suppose $P\in D$(\textless 
$X$\textgreater 
), then \textless 
$X$\textgreater 
$\nsubseteq P$. It follows, by the definition of \textless 
$X$\textgreater 
, that $X \nsubseteq P$. That is, $P\in D(X)$. Thus, we have
	$D(X)=$
	D(\textless 
	$X$\textgreater 
	).
\end{proof}

We now recall some basic notions about topology to be used later. For more about these, we refer to \cite{k}.

A topological space is a pair $(X, \tau)$, where $X$ is a nonempty set and $\tau$ is a family of subsets of $X$, called the topology, such that (i) $\emptyset, X \in \tau$, (ii) a finite intersection of members of $\tau$ is in $\tau$ and (iii) an arbitrary union of members of $\tau$ is in $\tau$.

The members of $\tau$ are called \emph{open sets} of $X$ and the elements of $X$ are called \emph{points}. A \emph{neighbourhood of a point $x$} in a topological space $X$ is a subset $W\subseteq X$ such that there exists an open set $U$ of $X$ satisfying $x\in U\subseteq W$. A set $U$ is open iff $U$ is the neighbourhood of every $x\in U$. A \emph{base $\mathcal{B}$} for a topology $\tau$ is a collection of open sets in $\tau$ such that  every open set in $\tau$
is a union of some members of $\mathcal{B}$.

\begin{lemma} A collection $\mathcal{B}$ of subsets of set $X$ is the base for some topology iff $X=\bigcup\{V: V\in \mathcal{B}\}$ and
	if $V_{1}, V_{2}\in \mathcal{B}, x\in V_{1}\cap V_{2}$, then there exists $V\in \mathcal{B}$ such that $x\in V\subseteq V_{1}\cap V_{2}$.
\end{lemma}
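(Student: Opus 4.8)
The plan is to prove the two implications directly from the definitions of base and topology. For the forward direction, assume $\mathcal{B}$ is a base for a topology $\tau$ on $X$. Since $X\in\tau$, it is a union of members of $\mathcal{B}$; as every member of $\mathcal{B}$ is a subset of $X$, this forces $X=\bigcup\{V:V\in\mathcal{B}\}$. For the second condition, given $V_{1},V_{2}\in\mathcal{B}$ the set $V_{1}\cap V_{2}$ lies in $\tau$, being a finite intersection of open sets, hence is a union of members of $\mathcal{B}$; so for each $x\in V_{1}\cap V_{2}$ one of those members $V$ satisfies $x\in V\subseteq V_{1}\cap V_{2}$.

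For the converse, suppose $\mathcal{B}$ satisfies the two stated conditions, and let $\tau$ consist of $\emptyset$ together with all unions of nonempty subfamilies of $\mathcal{B}$. I would then verify the three axioms of a topology. Membership of $\emptyset$ holds by construction, and $X\in\tau$ by the first hypothesis. Closure under arbitrary unions is immediate, since a union of sets each of which is a union of members of $\mathcal{B}$ is again such a union. The substantive point — and the only place where the second hypothesis is used — is closure under finite, equivalently binary, intersections: writing two members of $\tau$ as $U_{1}=\bigcup_{i}V_{i}$ and $U_{2}=\bigcup_{j}W_{j}$ with all $V_{i},W_{j}\in\mathcal{B}$, one has $U_{1}\cap U_{2}=\bigcup_{i,j}(V_{i}\cap W_{j})$; and by the second hypothesis each $V_{i}\cap W_{j}$ equals the union of those $V\in\mathcal{B}$ with $x\in V\subseteq V_{i}\cap W_{j}$ for some $x\in V_{i}\cap W_{j}$, hence is a union of members of $\mathcal{B}$. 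Therefore $U_{1}\cap U_{2}$ is a union of members of $\mathcal{B}$ and so lies in $\tau$.

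Finally I would observe that $\mathcal{B}\subseteq\tau$, since each $V\in\mathcal{B}$ is the union of the one-element family $\{V\}$, and that by the very definition of $\tau$ every open set is a union of members of $\mathcal{B}$; thus $\mathcal{B}$ is a base for $\tau$, completing the proof. The only mild obstacle is the binary-intersection step, which becomes routine once one keeps careful track of the index sets; the remainder is bookkeeping.
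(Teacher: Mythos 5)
Your proof is correct: both directions follow the standard textbook argument, and the binary-intersection step is handled properly (the only cosmetic nuance is that an empty intersection $V_i\cap W_j$ is the empty union, which your explicit inclusion of $\emptyset$ in $\tau$ already covers). The paper itself states this lemma without proof as a recalled fact from Kelley's \emph{General Topology}, and your argument is exactly the classical one that reference supplies, so there is nothing to reconcile.
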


A function $f: X\longrightarrow Y$ from a topological space $(X, \tau)$ to a topological space $(Y, \sigma)$ is \emph{continuous at a point $x\in X$} if for any neighbourhood $V$ of $f(x)$, there is a neighbourhood $U$ of $x$ such that $f(U)\subseteq V$. The function is called \emph{continuous} if it is continuous everywhere. For any function $f: X\longrightarrow Y$ between two topological spaces, $f$ is continuous iff for any open set $W$ of $Y$, $f^{-1}(W)$ is open in $X$ iff for any open set $V$ in a base $\mathcal{B}$ of $Y$, $f^{-1}(V)$ is open in $X$. A function $f:X\longrightarrow Y$ between two topological spaces $X$ and $Y$ is an \emph{open function} if for any open set $U$ of $X$, $f(U)$ is an open set of $Y$. A function $f:X\longrightarrow Y$ between two topological spaces $X$ and $Y$ is an open function iff for any open set $W$ in a base of $X$, $f(W)$ is open in $Y$. A bijective function $f: X\longrightarrow Y$ between two topological spaces is a \emph{homeomorphism} if both $f$ and $f^{-1}$ are continuous. A bijective function $f: X\longrightarrow Y$ between two topological spaces is a homeomorphism iff $f$ is continuous and open.

\begin{theorem} For any residuated lattice $L$, the family $\{D(X)|X\subseteq L\}$
	is a topology on Spec($L$), which we call the \emph{Stone topology} on $L$.
\end{theorem}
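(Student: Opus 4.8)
The plan is to verify the three axioms of a topology directly for the family $\mathcal{T}=\{D(X)\mid X\subseteq L\}$, leaning on the two lemmas already proved about the operator $D$ (namely $X\subseteq Y\Rightarrow D(X)\subseteq D(Y)$ and $D(X)=D(\langle X\rangle)$) together with the characterization of prime filters in terms of filter intersections. First I would dispose of the trivial clauses: since $\emptyset\subseteq P$ for every $P\in\Spec(L)$, no prime filter lies in $D(\emptyset)$, so $D(\emptyset)=\emptyset\in\mathcal{T}$; and since every prime filter is proper, $0\notin P$ for each $P\in\Spec(L)$, hence $D(\{0\})=\Spec(L)\in\mathcal{T}$ (equivalently $D(L)=\Spec(L)$).

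Next I would check closure under arbitrary unions. Given $\{X_i\}_{i\in I}$ with each $X_i\subseteq L$, I claim $\bigcup_{i\in I}D(X_i)=D\bigl(\bigcup_{i\in I}X_i\bigr)$. This is purely set-theoretic: $P\in\bigcup_{i\in I}D(X_i)$ iff $X_i\nsubseteq P$ for some $i$, which is equivalent to $\bigcup_{i\in I}X_i\nsubseteq P$, i.e. $P\in D\bigl(\bigcup_{i\in I}X_i\bigr)$. No property of filters or of primeness is needed here, so this step is routine.

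The substantive step is closure under finite intersections, and by induction it suffices to handle two members $D(X),D(Y)$ with $X,Y\subseteq L$. The candidate value is $D(X)\cap D(Y)=D(\langle X\rangle\cap\langle Y\rangle)$. Using $D(X)=D(\langle X\rangle)$ and $D(Y)=D(\langle Y\rangle)$, membership $P\in D(X)\cap D(Y)$ is equivalent to $\langle X\rangle\nsubseteq P$ and $\langle Y\rangle\nsubseteq P$. Now $\langle X\rangle$ and $\langle Y\rangle$ are filters and their intersection $\langle X\rangle\cap\langle Y\rangle$ is again a filter, so the characterization of prime filters (a proper filter $P$ is prime iff $F_1\cap F_2\subseteq P$ implies $F_1\subseteq P$ or $F_2\subseteq P$) applies in contrapositive form: for the proper filter $P$, ``$\langle X\rangle\nsubseteq P$ and $\langle Y\rangle\nsubseteq P$'' holds precisely when $\langle X\rangle\cap\langle Y\rangle\nsubseteq P$, i.e. $P\in D(\langle X\rangle\cap\langle Y\rangle)$. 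Hence $D(X)\cap D(Y)=D(\langle X\rangle\cap\langle Y\rangle)\in\mathcal{T}$.

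I expect this contrapositive application of the prime-filter characterization to be the only non-routine point; the main thing to be careful about is that one must pass to the generated filters $\langle X\rangle,\langle Y\rangle$ before invoking that lemma, since it is stated for filters, not for arbitrary subsets. Once the three clauses above are in place, $\mathcal{T}$ is a topology on $\Spec(L)$, which we call the Stone topology, completing the proof.
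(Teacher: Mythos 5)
Your proposal is correct and follows essentially the same route as the paper: trivial clauses for $\emptyset$ and $\Spec(L)$, the set-theoretic identity $D(\bigcup_i X_i)=\bigcup_i D(X_i)$ for unions, and for binary intersections the identity $D(X)\cap D(Y)=D(\langle X\rangle\cap\langle Y\rangle)$ obtained from $D(X)=D(\langle X\rangle)$, monotonicity of $D$, and the contrapositive of the prime-filter characterization (Lemma~2.9). The only cosmetic difference is your choice of witnesses $D(\emptyset)$ and $D(\{0\})$ where the paper uses $D(\{1\})$ and $D(L)$, which is immaterial.
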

\begin{proof}
	We complete the proof by verifying each of the following.
	
	(1) $D(L)= \Spec(L)$ and $D(1)=\emptyset$.
	
	(2) For any $X\subseteq L$ and $Y\subseteq L$, $D(X)\bigcap D(Y)=D$(\textless 
	X\textgreater 
		$\cap$\textless 
	Y\textgreater 
	).
	
	(3) For any family $\{X_{i}| i\in I\}$ of subsets of $L$, $D(\bigcup_{i\in I} X_{i})=\bigcup_{i\in I} D(X_{i})$.
	
	For any $P\in \Spec(L), L\nsubseteq P$. Thus $D(L)= \Spec(L)$.
	For any $P\in \Spec(L), \{1\}\subseteq P$. Hence $P\notin D(1)$. Therefore $D(1)=\emptyset$. Thus (1) holds.
	
	Since  \textless 
	X\textgreater 
	$\cap$\textless 
	Y\textgreater 
	 $\subseteq$\textless 
	 X\textgreater 
	 ,\textless 
	 Y\textgreater 
	 ,
	 by Lemma~2.10, we have $D$(\textless 
	 X\textgreater 
	 $\cap$\textless 
	 Y\textgreater 
	 )
	 $\subseteq D$(\textless 
	 $X$\textgreater 
	  ) $\bigcap D$(\textless 
	  $Y$\textgreater 
	 )
$=D(X)\bigcap D(Y)$. Conversely, suppose that $P\in D(X)\bigcap D(Y)$, then $X\nsubseteq P$ and $Y\nsubseteq P$. Hence 
	 \textless 
	 $X$\textgreater 
	 $\nsubseteq P$ 
	 and \textless 
	 $Y$\textgreater 
$\nsubseteq P$. 
	 By Lemma~2.9, we have \textless 
	 $X$\textgreater 
	 $\cap$\textless 
	 $Y$\textgreater 
	 $\nsubseteq P$. This shows that $P\in D($\textless 
	 X\textgreater 
	 $\cap$\textless 
	 Y\textgreater 
	 ). Therefore $D(X)\bigcap D(Y)=D$(\textless 
	 $X$\textgreater 
	 )
	 $\bigcap D$(\textless 
	 $Y$\textgreater 
	 )$\subseteq D$(\textless 
	 $X$\textgreater 
	 $\cap$\textless 
	 $Y$\textgreater 
	 ). Hence (2) holds.
	
	Lastly, we verify (3). Suppose that $P\in D(\bigcup_{i\in I} X_{i})$, then there exists $i\in I$ such that $X_{i}\nsubseteq P$. Thus we have $P\in D(X_{i})\subseteq \bigcup_{i\in I} D(X_{i})$. Hence $D(\bigcup_{i\in I} X_{i}) \subseteq \bigcup_{i\in I} D(X_{i})$. The reverse inclusion holds by Lemma~2.10 (1).
\end{proof}

\begin{remark} By Lemma~2.10 (2) and Theorem~2.12, we know that the open sets in the spectrum $\Spec(L)$ are exactly the subsets in $\{D(F):F\in \mathcal{F}(L)\}$.
\end{remark}

\begin{theorem} For any residuated lattice $L$, the family $\{D(a)\}_{a\in L}$ is a base for the \emph{Stone topology} on $\Spec(L)$.
\end{theorem}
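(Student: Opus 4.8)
The plan is to verify directly the two defining properties of a base for the Stone topology: first, that each $D(a)$ is itself an open set, and second, that every open set can be written as a union of sets of the form $D(a)$. The first property is immediate: $D(a)$ is by definition the set $D(\{a\})$, i.e.\ one of the sets $D(X)$ with $X\subseteq L$, and by Theorem~2.12 these are precisely the open sets of $\Spec(L)$.

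For the second property I would start from an arbitrary open set $U$. By the definition of the Stone topology (equivalently, by Remark~2.13 one may even take $U=D(F)$ for a filter $F$), there is some $X\subseteq L$ with $U=D(X)$. Now write $X=\bigcup_{a\in X}\{a\}$ and apply the identity $D(\bigcup_{i\in I}X_i)=\bigcup_{i\in I}D(X_i)$ proved as item~(3) in the proof of Theorem~2.12: this yields
$U=D(X)=D\bigl(\bigcup_{a\in X}\{a\}\bigr)=\bigcup_{a\in X}D(\{a\})=\bigcup_{a\in X}D(a)$,
so $U$ is a union of members of $\{D(a)\}_{a\in L}$. The degenerate case $X=\emptyset$ gives the empty union $\emptyset=D(\emptyset)$, and the case $X=L$ gives $\Spec(L)=\bigcup_{a\in L}D(a)$, so the family does cover the whole space.

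I do not expect a genuine obstacle here; the one subtlety worth attention is that "base" must mean base for the Stone topology specifically, not merely for some topology, so I would rely on the already established equality $D(\bigcup_{i}X_i)=\bigcup_{i}D(X_i)$ rather than on the abstract sufficient condition of Lemma~2.8. As a byproduct one can also record, using Remark~2.7(2) together with Lemma~2.10, that $D(a)\cap D(b)=D(\langle a\rangle\cap\langle b\rangle)=D(\langle a\vee b\rangle)=D(a\vee b)$, so the basic open sets are closed under finite intersection; this is not needed for the statement but is convenient for later use.
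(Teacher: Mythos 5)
Your proposal is correct and follows essentially the same route as the paper: the paper also writes an arbitrary open set as $D(X)=D\bigl(\bigcup_{a\in X}\{a\}\bigr)=\bigcup_{a\in X}D(a)$ using the union identity from the proof of Theorem~2.12. Your extra remarks (openness of each $D(a)$, the empty case, and the closure of basic sets under intersection via $D(a)\cap D(b)=D(a\vee b)$) are sound but not needed beyond what the paper records.
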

\begin{proof} Suppose that $X\subseteq L$ and $D(X)$ is an arbitrary open set of $\Spec(L)$, then $D(X)=D(\bigcup_{a\in X}\{a\})=\bigcup_{a\in X} D(a)$. Hence every open set $U$ of $\Spec(L)$ is the union of a subset of $\{D(a)\}_{a\in U}$.
\end{proof}

\begin{proposition} For any $P\in \Spec(L)$, $O(P)$ is a proper filter of a residuated lattice $L$ satisfying $O(P)\subseteq P$, where
	$O(P)=\{x\in L|a\vee x=1$ for some $a\in L-P\}$.
\end{proposition}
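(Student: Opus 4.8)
The plan is to establish the three claims in turn: that $O(P)$ is a filter, that it is proper, and that $O(P)\subseteq P$. The only facts about $P$ I will need are that it is a proper prime filter --- so $1\in P$, $P$ is upward closed, $L-P$ is nonempty, and whenever a join $x\vee y$ lies in $P$ then $x\in P$ or $y\in P$ --- together with the distributivity-type inequality of Remark~2.7~(1), namely $x\vee(y\otimes z)\geq(x\vee y)\otimes(x\vee z)$.

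First I would verify that $O(P)$ is a filter. For nonemptiness, pick any $a\in L-P$ (possible since $P$ is proper); then $a\vee 1=1$, so $1\in O(P)$. Upward closure is immediate: if $x\in O(P)$ via a witness $a\in L-P$ with $a\vee x=1$, and $x\le y$, then $1=a\vee x\le a\vee y$ forces $a\vee y=1$, so $y\in O(P)$. The substantive step is closure under $\otimes$: given $x,y\in O(P)$ with witnesses $a,b\in L-P$ (so $a\vee x=1$ and $b\vee y=1$), I would use $a\vee b$ as a candidate witness for $x\otimes y$. Since $P$ is prime and neither $a$ nor $b$ lies in $P$, we get $a\vee b\notin P$, i.e. $a\vee b\in L-P$. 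Also $(a\vee b)\vee x\ge a\vee x=1$ and $(a\vee b)\vee y\ge b\vee y=1$, so Remark~2.7~(1) gives $(a\vee b)\vee(x\otimes y)\ge\bigl((a\vee b)\vee x\bigr)\otimes\bigl((a\vee b)\vee y\bigr)=1\otimes1=1$. Hence $x\otimes y\in O(P)$, and $O(P)$ is a filter.

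For the inclusion $O(P)\subseteq P$: if $x\in O(P)$ with witness $a\in L-P$, then $a\vee x=1\in P$; since $P$ is prime and $a\notin P$, we must have $x\in P$. Properness of $O(P)$ then follows because $O(P)$ is contained in the proper filter $P$; alternatively, if $0\in O(P)$ then $a=a\vee 0=1$ for some $a\in L-P$, contradicting $1\in P$.

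I do not expect a real obstacle here: the argument is essentially a direct unwinding of the definition of $O(P)$. The one place that needs a little care is the $\otimes$-closure, where two observations do the work --- that primeness of $P$ is exactly what keeps the candidate witness $a\vee b$ outside $P$, and that Remark~2.7~(1) is precisely the tool that converts ``$a\vee b$ sits above both $a$ and $b$'' into ``$(a\vee b)\vee(x\otimes y)=1$''.
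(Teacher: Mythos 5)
Your proof is correct and follows essentially the same route as the paper: upward closure is immediate, $\otimes$-closure uses the witness $a\vee b$ together with primeness of $P$ and the inequality $x\vee(y\otimes z)\geq(x\vee y)\otimes(x\vee z)$, and $O(P)\subseteq P$ comes from $a\vee x=1\in P$ with $a\notin P$. The only cosmetic differences are that you explicitly check $1\in O(P)$ and offer properness via $O(P)\subseteq P$, whereas the paper notes $0\notin O(P)$ directly (your alternative argument), so there is nothing substantive to change.
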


\begin{proof} Since $1\notin L-P$, it follows immediately that $0\notin O(P)$. If $x\in O(P)$ and $x\leq y$, then there exists $a\in L-P$ such that $a\vee x=1$. Hence $1=x\vee a\leq y\vee a$. Therefore $y\vee a=1$, showing
	that $y\in O(P)$. Next, if  $x, y\in O(P)$, then there exist $a,b\in L-P$  such that  $a\vee x=1$ and $b\vee y=1$. So $a\vee b\in L-P$,
	because $P$ is a prime filter of $L$. Thus $(a\vee b) \vee (x\otimes y) \geq (a\vee b\vee x)\otimes (a\vee b\vee y)=1\otimes 1=1$. Therefore
	$(a\vee b) \vee (x\otimes y)=1$. This shows that $x\otimes y\in O(P)$. For any $x\in O(P)$, there exists $a\in L-P$  such that  $a\vee x=1\in P$.
	Thus $x\in P$.
\end{proof}

\begin{example} Let $L=\{0, a, b, c, 1\}$ with $ 0 < a, b < c < 1$ and $a, b$ incomparable. The operations $\otimes$ and
	$\rightarrow$ are defined as follows:\smallskip
\begin{center}
\begin{minipage}[c]{0.2\textwidth}
\centering
		\begin{tabular}{|c|c|c|c|c|c|}
			\hline  $\otimes$  & 0 & $a$ & $b$ & $c$ & 1 \\
			\hline 	0 & 0 & 0 & 0 & 0 & 0\\
			\hline  $a$ & 0 & $a$ & 0 & $a$ & $a$\\
			\hline  $b$ & 0 & 0 & $b$ & $b$ & $b$\\
			\hline  $c$ & 0 & $a$ & $b$ & $c$ & $c$\\
			\hline  1 & 0 & $a$ & $b$ & $c$ & 1\\
			\hline
		\end{tabular}\label{mytable}
	\end{minipage}
\begin{minipage}[c]{0.5\textwidth}
\centering
\begin{tabular}{|c|c|c|c|c|c|}
			\hline  $\rightarrow$  & 0 & $a$ & $b$ & $c$& 1 \\
			\hline 	0 & 1 & 1 & 1 & 1 & 1\\
			\hline  $a$ & $b$ & 1 & $b$ & 1 & 1\\
			\hline  $b$ & $a$ & $a$ & 1 & 1 & 1\\
			\hline  $c$ & 0 & $a$ & $b$ & 1 & 1\\
			\hline  1 & 0 & $a$ & $b$ & $c$ & 1\\
			\hline
		\end{tabular}
	\end{minipage}
\end{center}\smallskip
\noindent Then $L$ becomes a residuated lattice (see \cite{cj}). The filters of $L$ are $\{1\}, \{c, 1\}$, $\{a, c, 1\}, \{b, c, 1\}$ and $L$. It is
easy to check that the prime filters of $L$ are $\{a, c, 1\}, \{b, c, 1\}$, and $O(\{a, c, 1\})=\{1\}$, $O(\{b, c, 1\})=\{1\}$.
\end{example}

\begin{example} Let $L=\{0, a, b, 1\}$ with $ 0 < a, b < 1$ and $a, b$ incomparable. The operations $\otimes$ and
	$\rightarrow$ are defined as follows:\smallskip
\begin{center} 
\begin{minipage}[c]{0.2\textwidth}
	\centering
	\begin{tabular}{|c|c|c|c|c|}
		\hline  $\otimes$  & 0 & $a$ & $b$ & 1 \\
		\hline 	0 & 0 & 0 & 0 &  0\\
		\hline  $a$ & 0 & $a$ & 0 &  $a$\\
		\hline  $b$ & 0 & 0 & $b$ &  $b$\\
    	\hline  1 & 0 & $a$ & $b$ &  1\\
		\hline
	\end{tabular}\label{mytable}
\end{minipage}
\begin{minipage}[c]{0.5\textwidth}
	\centering
	\begin{tabular}{|c|c|c|c|c|}
		\hline  $\rightarrow$  & 0 & $a$ & $b$ & 1 \\
		\hline 	0 & 1 & 1 & 1 &  1\\
		\hline  $a$ & $b$ & 1 & $b$ &  1\\
		\hline  $b$ & $a$ & $a$ & 1 &  1\\
	    \hline  1 & 0 & $a$ & $b$ &  1\\
		\hline
	\end{tabular}
\end{minipage}
\end{center}\smallskip
\noindent It is routine to verify that with the above operations, $L$ is a residuated lattice and the filters of $L$ are $\{1\}, \{a, 1\}, \{b,  1\}$ and $L$. In addition, the prime filters of $L$ are $\{a, 1\}, \{b, 1\}$ and $O(\{a, 1\})=\{a, 1\}$, $O(\{b, 1\})=\{b, 1\}$.
\end{example}

\section{The sheaf representations of residuated lattices}

In this section, we introduce the notion of sheaf space of residuated lattices and construct the sheaf representations of residuated lattices.

\begin{definition} A sheaf space of residuated lattices is a triple $(E, p, X)$ satisfying the following conditions:
	\begin{enumerate}
	\item[(1)] Both $E$ and $X$ are topological spaces.
	
	\item[(2)] $p:E\longrightarrow X$ is a local homeomorphism from $E$ onto $X$, i.e. for any $e\in E$, there are open neighbourhoods $U$ and
	$U^{\prime}$ of $e$ and $p(e)$  such that  $p$ maps $U$ homeomorphically onto $U^{\prime}$.
	
	\item[(3)] For any $x\in X, p^{-1}(\{x\})=E_{x}$ is a residuated lattice.
	
	\item[(4)] The functions defined by $(a,b)\longmapsto a\wedge_{x} b, (a,b)\longmapsto a\vee_{x} b, (a,b)\longmapsto a\otimes_{x} b, (a,b)\longmapsto a\rightarrow_{x} b$ from the set $\{(a,b)\in E \times E | p(a)=p(b)\}$ into $E$ are continuous, where $x=p(a)=p(b)$.
	
	\item[(5)] The functions $\underline{0},\underline{1}: X\longrightarrow E$ assigning to every $x$ in $X$ the $0_{x}$ and $1_{x}$ of $E_{x}$ respectively, are continuous.
	\end{enumerate}
\end{definition}

\begin{remark} In the Definition~3.1, $E$ is usually called the total space, $X$ as the base space and $E_{x}$ is called the stalk of $E$
	at $x\in X$.
\end{remark}

\begin{definition} Let $(E, p, X)$ be a sheaf space of residuated lattices. For any $Y\subseteq X$, a function $\sigma: Y\longrightarrow E$ is called a section over $Y$ if it is continuous  such that for any $y\in Y, p(\sigma(y))=y$.
\end{definition}

\begin{remark} If we define the operations pointwisely on the set of all sections over $Y$, it constitutes a residuated lattice. We denote it by $\Gamma(Y, E)$. The elements of $\Gamma(X, E)$ are called \emph{global sections}.
\end{remark}

\begin{definition}(\cite{tu}) Suppose that $L$ and $L^{\prime}$ are residuated lattices. A residuated lattice morphism is a function $h: L\longrightarrow L^{\prime}$ such that $h(a \wedge_{L} b) = h(a) \wedge_{L^{\prime}} h(b), h(a \vee_{L} b) = h(a) \vee_{L^{\prime}} h(b),
	h(a \otimes_{L} b) = h(a) \otimes_{L^{\prime}} h(b), h(a \rightarrow_{L} b) = h(a) \rightarrow_{L^{\prime}} h(b)$ and $h(0) = 0^{\prime}, h(1) = 1^{\prime}$.
\end{definition}

\begin{definition} A sheaf representation of a residuated lattice $L$ will mean an injective residuated lattice morphism $\phi :L \longrightarrow \Gamma(X, E)$ from $L$ to the residuated lattice $\Gamma(X, E)$ of global sections of a sheaf space of residuated lattices $(E, p, X)$.
\end{definition}

\begin{lemma} Let $(E, p, X)$ be a sheaf space of residuated lattices. If we define $p_{x}:\Gamma (X,E)\longrightarrow E_{x}$
	by $p_{x}(\sigma)=\sigma(x)$, then for any $x\in X,$ $p_{x}$ is a residuated lattice morphism.
\end{lemma}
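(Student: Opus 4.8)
The plan is a direct unwinding of the definitions, so no genuine difficulty is expected. The one fact to invoke is Remark~3.4: the residuated lattice structure on $\Gamma(X,E)$ is computed pointwise, so that for global sections $\sigma,\tau$ and any $x\in X$ one has $(\sigma\wedge\tau)(x)=\sigma(x)\wedge_{x}\tau(x)$, and likewise for $\vee$, $\otimes$ and $\rightarrow$; moreover the constants $0$ and $1$ of $\Gamma(X,E)$ are precisely the continuous global sections $\underline{0}$ and $\underline{1}$ supplied by Definition~3.1(5), which satisfy $\underline{0}(x)=0_{x}$ and $\underline{1}(x)=1_{x}$ (the pointwise bottom and top of $E_{x}$ are sections, hence they are the bounds of the pointwise order on $\Gamma(X,E)$).

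First I would check that $p_{x}$ actually lands in $E_{x}$: by Definition~3.3, every section $\sigma\in\Gamma(X,E)$ satisfies $p(\sigma(x))=x$, hence $p_{x}(\sigma)=\sigma(x)\in p^{-1}(\{x\})=E_{x}$, so $p_{x}$ is a well-defined map $\Gamma(X,E)\longrightarrow E_{x}$. Next, for each binary operation $\star\in\{\wedge,\vee,\otimes,\rightarrow\}$ and all $\sigma,\tau\in\Gamma(X,E)$, I would compute
\[
p_{x}(\sigma\star\tau)=(\sigma\star\tau)(x)=\sigma(x)\star_{x}\tau(x)=p_{x}(\sigma)\star_{x}p_{x}(\tau),
\]
where the middle equality is exactly the pointwise definition of the operations on $\Gamma(X,E)$ recorded in Remark~3.4. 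For the bounds, $p_{x}(0)=\underline{0}(x)=0_{x}$ and $p_{x}(1)=\underline{1}(x)=1_{x}$. Comparing with Definition~3.5, this shows that $p_{x}$ preserves all the operations and both constants, i.e. $p_{x}$ is a residuated lattice morphism.

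Since every step is purely formal, there is no real obstacle. The only point worth a comment is the implicit claim that the pointwise operations genuinely yield sections (continuous maps $X\to E$ that are right inverses of $p$), which is what makes the computation above meaningful; this is guaranteed by the continuity hypotheses in Definition~3.1(4)--(5) and is already asserted in Remark~3.4, so it may simply be cited rather than reproved. Thus the argument consists solely of substituting the pointwise definitions into the morphism equations of Definition~3.5.
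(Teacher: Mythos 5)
Your proposal is correct and follows essentially the same route as the paper: unwind the pointwise definition of the operations on $\Gamma(X,E)$ to get $p_{x}(\sigma\star\tau)=p_{x}(\sigma)\star_{x}p_{x}(\tau)$ and check the constants via $\underline{0},\underline{1}$. The paper simply writes out the computation for $\otimes$ and the constants and declares the other operations similar, so your version is just a slightly more complete rendering of the same argument.
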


\begin{proof} Here we only prove that $\otimes$ is a morphism, the proofs for other operations are similar. For any $\sigma, \mu\in \Gamma(X, E)$,
	$p_{x}(\mu\otimes \sigma)=(\mu\otimes \sigma)(x)=\mu(x)\otimes_{x} \sigma(x)=p_{x}(\mu)\otimes_{x} p_{x}(\sigma)$,
	and $p_{x}(\underline{0})=\underline{0}(x)=0_{x}, p_{x}(\underline{1})=\underline{1}(x)=1_{x}.$
\end{proof}

\begin{lemma} If $L$ is a residuated lattice, then for any $a\in L, V(a)=\{P\in  \Spec(L)|a\in O(P)\}$ is open in $\Spec(L)$.
\end{lemma}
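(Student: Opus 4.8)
The plan is to rewrite $V(a)$ as a union of basic open sets and then invoke the base for the Stone topology from Theorem~2.14. First I would unfold the definitions: by Proposition~2.15, a prime filter $P$ lies in $V(a)$ exactly when $a\in O(P)$, i.e. when there exists $b\in L-P$ with $b\vee a=1$. Since the condition $b\notin P$ is precisely the condition $P\in D(b)$, this yields the identity
\[
V(a)=\bigcup\{\,D(b)\mid b\in L,\ a\vee b=1\,\}.
\]

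With this identity in hand the conclusion is immediate: each $D(b)$ is a member of the base $\{D(c)\}_{c\in L}$ of the Stone topology on $\Spec(L)$ (Theorem~2.14), hence open, and an arbitrary union of open sets is open. The degenerate cases cause no trouble: if no element $b$ other than $1$ satisfies $a\vee b=1$, the union reduces to $D(1)=\emptyset$ (or is the empty union), which is still open, so the argument covers every $a$ uniformly.

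I do not anticipate any real obstacle here; the one point to handle with care is the logical bookkeeping in the first step — turning the statement ``$a\in O(P)$'' into an existential statement ranging over the complement $L-P$, and then trading that existential for a union over the corresponding basic open sets $D(b)$. No structural properties of $\otimes$ or $\rightarrow$ are needed beyond what is already packaged into Proposition~2.15 (in particular the fact, proved there, that $O(P)$ is a well-defined proper filter contained in $P$, which is not actually required for openness but confirms that $V(a)$ is the expected set).
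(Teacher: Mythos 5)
Your proposal is correct and is essentially the paper's own argument: the paper shows pointwise that each $P\in V(a)$ has a basic open neighbourhood $D(b)\subseteq V(a)$ for some $b\notin P$ with $a\vee b=1$, which is exactly your identity $V(a)=\bigcup\{D(b)\mid a\vee b=1\}$ phrased as a union. No gaps; the reformulation is only cosmetic.
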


\begin{proof} Assume that $P\in V(a)$, then $a\in O(P)$. Thus there exists $b\in L-P$  such that  $a\vee b=1$. If $Q\in D(b)$, then
	$b\notin Q$. Hence $a\in O(Q)$, i.e. $Q\in V(a)$. Therefore $P\in D(b)\subseteq V(a)$. This shows that $V(a)$ is open.
\end{proof}

In the sequel, we will construct a sheaf space for each residuated lattice using the residuated lattice $L$ and the topological
space $\Spec(L)$. Let $E_{L}$ be the disjoint union of the set $\{L/O(P)\}_{P\in \Spec(L)}$ and $\pi: E_{L}\longrightarrow \Spec(L)$ the canonical projection.

\begin{theorem} Let $L$ be a residuated lattice. Then the family $\mathcal{B}=\{D(F, a):F\in \mathcal{F}(L)$ and $a\in L\}$
	is a base for a topology on $E_{L}$, where $D(F, a)=\{a_{P}:P\in D(F)\}$ and $a_{P}=a/O(P)$.
\end{theorem}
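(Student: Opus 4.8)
The plan is to invoke Lemma~2.8 (the base criterion for a topology): we must check that the sets $D(F,a)$ cover $E_L$, and that whenever a point $a_P$ lies in $D(F_1,a_1)\cap D(F_2,a_2)$, there is a basic set $D(F,a)$ with $a_P\in D(F,a)\subseteq D(F_1,a_1)\cap D(F_2,a_2)$. First I would dispatch the covering condition: for any $e\in E_L$ we have $e=a_P$ for some $P\in\Spec(L)$ and some $a\in L$; taking $F=\{1\}$ gives $D(F)=D(\{1\}^{c}\text{-style})=\Spec(L)$, so $e=a_P\in D(\{1\},a)$, and hence $E_L=\bigcup\mathcal{B}$.

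For the intersection condition, suppose $a_P\in D(F_1,a_1)\cap D(F_2,a_2)$. By definition this means $P\in D(F_1)\cap D(F_2)$ and $a_P=(a_1)_P=(a_2)_P$, i.e. $a_1\equiv a_2$ modulo $O(P)$. The key local fact I will use is that the congruence $\equiv_{O(P)}$ is a directed union of congruences $\equiv_{\langle b\rangle}$ as $b$ ranges over $L\setminus P$ (since $O(P)=\bigcup\{\langle b\rangle : b\in L\setminus P\}$ is a directed union of principal filters by Remark~2.5, and congruence membership $((a_1\rightarrow a_2)\otimes(a_2\rightarrow a_1))\in O(P)$ is witnessed by a single such $b$). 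So there is $b\in L\setminus P$ with $a_1\equiv a_2$ modulo $\langle b\rangle$, equivalently $(a_1)_Q=(a_2)_Q$ for every $Q$ with $b\notin Q$. Now I want a filter $F$ with $P\in D(F)$ and $D(F)\subseteq D(F_1)\cap D(F_2)\cap D(b)$. Since $\Spec(L)$ carries the Stone topology and $\{D(b)\}_{b\in L}$ is a base (Theorem~2.14), and $D(F_1)\cap D(F_2)$ is open containing $P$, there is $c\in L$ with $P\in D(c)\subseteq D(F_1)\cap D(F_2)$; intersecting with $D(b)$ and using $D(c)\cap D(b)=D(\langle c\vee b\rangle)=D(c\vee b)$ (Remark~2.7(2) together with Theorem~2.12(2)), set $F=\langle c\vee b\rangle$, or more simply just take the single element $d=c\vee b$ and use $F=\langle d\rangle$, noting $P\in D(d)$ because $P\in D(c)\cap D(b)$. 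Then for any $Q\in D(F)=D(d)$ we have $c\notin Q$ and $b\notin Q$, hence $Q\in D(F_1)\cap D(F_2)$ and $(a_i)_Q=(a_1)_Q$, so $a_Q:=(a_1)_Q\in D(F_i,a_i)$; thus $D(F,a_1)\subseteq D(F_1,a_1)\cap D(F_2,a_2)$ and $a_P\in D(F,a_1)$, as required.

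The main obstacle is the second bullet: one has to produce a single basic neighbourhood inside the intersection, and the subtlety is that membership $a_P\in D(F_i,a_i)$ records both a condition on $P$ (that $P\in D(F_i)$) and an equation in the stalk (that $a_i$ and the chosen representative agree modulo $O(P)$), and this stalk equation must be propagated to a whole open set of prime filters. The point making this work is precisely that $O(P)$ is a directed union of principal filters (Remark~2.5), so any single equation valid modulo $O(P)$ is already valid modulo some principal $\langle b\rangle$ with $b\notin P$, and $D(b)$ is open; everything else is bookkeeping with the identities $D(X)=D(\langle X\rangle)$ and $D(x)\cap D(y)=D(x\vee y)$ established in Section~2. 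A minor point to be careful about is that the representative $a$ chosen for the output basic set $D(F,a)$ can be taken to be $a_1$ (equivalently $a_2$), since on $D(F)$ these induce the same section.
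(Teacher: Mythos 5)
Your overall strategy (verify the base criterion of Lemma~2.8, and handle the intersection condition by propagating the stalk equation to an open set of primes) is the same as the paper's, but the justification of the crucial propagation step is wrong. You rest it on two claims: (i) $O(P)=\bigcup\{\langle b\rangle : b\in L\setminus P\}$, and (ii) ``$a_1\equiv a_2$ modulo $\langle b\rangle$, equivalently $(a_1)_Q=(a_2)_Q$ for every $Q$ with $b\notin Q$.'' Both are false. For (i): $0\notin P$ for every prime filter $P$ and $\langle 0\rangle=L$, so the right-hand side is always all of $L$, while $O(P)$ is proper (Proposition~2.15); even if you discard $0$, Example~2.17 gives $O(\{a,1\})=\{a,1\}$ while the only other principal filter generated by an element outside $P$ is $\langle b\rangle=\{b,1\}$, which misses $a$. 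For (ii), the direction you actually use later ($u\in\langle b\rangle$ and $b\notin Q$ imply $u\in O(Q)$) fails: in Example~2.16 take $a_1=a$, $a_2=1$, so $u=(a\rightarrow 1)\otimes(1\rightarrow a)=a$, take $b=a$ and $Q=\{b,c,1\}$; then $u\in\langle a\rangle$ and $a\notin Q$, yet $O(Q)=\{1\}$, so $a_Q\neq 1_Q$. Hence the step ``for any $Q\in D(d)$ we have $(a_1)_Q=(a_2)_Q$'' is unsupported as written, and this is precisely the point you flagged as the main obstacle.

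The statement you need is true, but it must come from the definition of $O(P)$ rather than from membership in a principal filter: since $u=(a_1\rightarrow a_2)\otimes(a_2\rightarrow a_1)\in O(P)$, there is $b\in L\setminus P$ with $b\vee u=1$, and then the same $b$ witnesses $u\in O(Q)$ for every $Q\in D(b)$; this is exactly the paper's Lemma~3.8, i.e.\ $V(u)$ is open and $P\in D(b)\subseteq V(u)$. With that substitution your remaining bookkeeping (choosing $P\in D(c)\subseteq D(F_1)\cap D(F_2)$ from the base $\{D(x)\}_{x\in L}$, using $D(c)\cap D(b)=D(c\vee b)$, and taking $F=\langle c\vee b\rangle$, noting $c\vee b\notin Q$ forces $c\notin Q$ and $b\notin Q$) is correct and essentially reproduces the paper's proof, which intersects $D(F_1)\cap D(F_2)$ with $V(u)$ and shrinks to some $D(F)$. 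One more small slip: in the covering part, $D(\{1\})=\emptyset$ by Theorem~2.12, since every prime filter contains $1$; take instead $F=L$ (or $\langle 0\rangle$), for which $D(L)=\Spec(L)$, so every $a_P$ lies in $D(L,a)$.
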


\begin{proof} We complete the proof  in two steps.
	
	(i) For every $D_{1}, D_{2}\in \mathcal{B}$ and $x\in D_{1}\cap D_{2}$, there exists a $D\in \mathcal{B}$  such that
	$x\in D \subseteq D_{1}\cap D_{2}$.
	
	Take $D_{1}= D(F_{1}, a), D_{2}=D(F_{2}, b)$ with $F_{1}, F_{2}\in \mathcal{F}(L)$ and $a,b\in L$. Suppose that $x\in D(F_{1}, a),$ 
	$x\in D(F_{2}, b)$, then there exists $P\in D(F_{1})$ and $Q\in D(F_{2})$ such that  $x=a_{P}=b_{Q}$. Thus $P=Q$ and $(a\rightarrow b)\otimes (b\rightarrow a)\in O(P)$. Hence $P\in D(F_{1})\cap D(F_{2})\cap V((a\rightarrow b)\otimes (b\rightarrow a)):=W$. By Remark~2.13 and Lemma~3.8, we have that $W$ is open in Spec$(L)$. Hence there exists a filter $F$  such that  $P\in D(F)\subseteq W\subseteq D(F_{1})\cap D(F_{2}).$ Therefore $D(F, a)=\{a_{p}: P\in D(F)\}\subseteq D(F_{1}, a)$ and $D(F, a)=\{a_{P}: P\in D(F)\}$
	$=\{b_{P}:P\in D(F)\}\subseteq \{b_{P}: P\in D(F_{2})\}=D(F_{2}, b)$, because $(a\rightarrow b)\otimes (b\rightarrow a)\in O(P)$. Therefore $x\in D(F, a)\subseteq D(F_{1}, a)\cap D(F_{2}, b)$.
	
	(ii) For every $x\in E_{L}$, there exists a $D\in \mathcal{B}$ with  $x\in D$.
	
	Suppose that $x\in E_{L}$, then there exists $a\in L$ and $P\in \Spec(L)$  such that  $x=a_{P}$. Thus there exists $G\in \mathcal{F}(L)$ such that $P\in D(G)$. This shows that $x\in D(G, a)$.
\end{proof}

In the sequel, we will use $\mathcal{T}(\mathcal{B})$ to denote the topology on $E_{L}$ generated by the above $\mathcal{B}$.

\begin{theorem} The assignment $\pi: E_{L}\longrightarrow \Spec(L)$ defined by $a_{P}\longmapsto P$ is a local homeomorphism of $(E_{L}, \mathcal{T}(\mathcal{B}))$ onto $\Spec(L)$.
\end{theorem}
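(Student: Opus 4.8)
The plan is to establish three facts which together yield that $\pi$ is a local homeomorphism of $E_L$ onto $\Spec(L)$ in the sense of Definition~3.1(2): $\pi$ is surjective, $\pi$ is continuous, and $\pi$ is open; the local homeomorphism condition then follows formally from the last two.

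Surjectivity is immediate, since for each $P\in\Spec(L)$ the stalk $L/O(P)$ is nonempty (it contains $1_P$), so $P=\pi(1_P)$. For continuity I would recall from Remark~2.13 that the open sets of $\Spec(L)$ are precisely the sets $D(F)$ with $F\in\mathcal{F}(L)$; unwinding the definition of $\pi$ gives $\pi^{-1}(D(F))=\{a_P : a\in L,\ P\in D(F)\}=\bigcup_{a\in L}D(F,a)$, a union of members of the base $\mathcal{B}$ from Theorem~3.9, hence open in $(E_L,\mathcal{T}(\mathcal{B}))$. For openness, since $\mathcal{B}$ is a base it suffices to check $\pi$ on basic opens, and $\pi(D(F,a))=\{P : P\in D(F)\}=D(F)$, which is open in $\Spec(L)$.

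To obtain the local homeomorphism condition, fix $e\in E_L$ and write $e=a_P$ for some $a\in L$ and $P\in\Spec(L)$. I would take $U=D(L,a)$ and $U'=D(L)=\Spec(L)$ (the equality holding because every prime filter is proper); since $P\in D(L)$, $U$ is an open neighbourhood of $e$ and $U'$ an open neighbourhood of $\pi(e)=P$. The restriction $\pi|_U\colon U\to U'$ is a bijection: $U=\{a_Q : Q\in\Spec(L)\}$, and because $E_L$ is a \emph{disjoint} union, $a_Q=a_{Q'}$ forces $Q=Q'$, so $\pi|_U$ is injective, and it is visibly onto $\Spec(L)$. It is continuous, being the restriction of the continuous map $\pi$. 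It is open onto $U'$: if $V\subseteq U$ is open, then, $U$ being open in $E_L$, $V$ is open in $E_L$, whence $\pi(V)$ is open in $\Spec(L)=U'$. Since a continuous open bijection is a homeomorphism, $\pi$ maps $U$ homeomorphically onto $U'$, completing the verification.

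The steps are all routine once $\mathcal{B}$ and $O(P)$ are in hand, so I do not expect a serious obstacle; the only points needing care are bookkeeping ones — computing $\pi^{-1}(D(F))$ and $\pi(D(F,a))$ exactly from the definition of $D(F,a)$, and invoking the disjoint-union structure of $E_L$ to conclude that $\pi$ restricted to $D(L,a)$ is injective (distinct prime filters index distinct summands, even when the corresponding cosets coincide as sets).
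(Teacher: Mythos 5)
Your proof is correct and takes essentially the same route as the paper: the paper restricts $\pi$ to an arbitrary basic open $D(F,a)$ around the given point and checks directly that this restriction is an injective, continuous, open map onto $D(F)$, while you first verify continuity and openness of $\pi$ globally and then specialize to the neighbourhood $D(L,a)$, using the disjoint-union structure of $E_L$ for injectivity, exactly as the paper does implicitly. One small caution: your opening claim that the local homeomorphism condition ``follows formally'' from continuity and openness alone is not literally true (a continuous open surjection need not be locally injective); it is the injectivity of $\pi$ on $D(L,a)$, which you do prove in the third paragraph, that makes the argument complete.
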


\begin{proof} The mapping $\pi$ is well defined and it is clear that $\pi$ is surjective.
	Suppose that $a_{P}\in E_{L}$ and $U=D(F, a)$ is an open neighbourhood of $a_{P}$ from $\mathcal{B}$. Obviously,
	$\pi(D(F, a))=D(F)$. The restriction $\pi_{U}$ of $\pi$ to $U$ is injective from $U$ into $D(F)$.
	
	(i) $\pi_{U}$ is continuous:
		In fact, suppose that $D(G)$ is an open set of $\Spec(L)$, then $D(F)\cap D(G)=D(F\cap G)$
	is a base open set in $D(F)$. Also $\pi^{-1}_{U}(D(F\cap G))=\{a_{P}: P\in D(F\cap G)\}=D(F\cap G, a)$ and it is an open
	subset of $D(F, a)$.
	
	(ii) $\pi_{U}$ is open:	
	To see this, assume that $D(H, b)$ is a base open set of $E_{L}$. Then $D(H, b)\cap U$ is a base open subset of $U$. Also $\pi_{U}(U\cap D(H, b))=D(F)\cap D(H)$, which is open in $D(F)$.
\end{proof}

\begin{proposition} For any $a\in L$, the function $\hat{a}: \Spec(L) \longrightarrow E_{L}$ defined by $\hat{a}(P)=a_{P}$ is a global section of $(E_{L}, \pi, \Spec(L))$.
\end{proposition}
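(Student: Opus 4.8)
The plan is to check the two requirements in the definition of a section over $Y$ (Definition~3.3), taken with $Y=\Spec(L)$: that $\pi\circ\hat a$ is the identity, and that $\hat a$ is continuous. Observe first that $\hat a$ is well defined, since each $O(P)$ is a proper filter by Proposition~2.15, so $a_P=a/O(P)$ makes sense in the stalk $L/O(P)$; moreover $\pi(\hat a(P))=\pi(a_P)=P$ by the definition of $\pi$ in Theorem~3.10, so the projection condition holds at once. The entire content of the proposition is therefore continuity of $\hat a:\Spec(L)\longrightarrow (E_L,\mathcal T(\mathcal B))$.

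For continuity, since $\mathcal B=\{D(F,b):F\in\mathcal F(L),\ b\in L\}$ is a base for $\mathcal T(\mathcal B)$ by Theorem~3.9, it suffices to show that $\hat a^{-1}(D(F,b))$ is open in $\Spec(L)$ for every $F\in\mathcal F(L)$ and $b\in L$. I would compute this preimage directly. A point $P\in\Spec(L)$ lies in $\hat a^{-1}(D(F,b))$ iff $a_P\in D(F,b)$, i.e. iff $a_P=b_Q$ for some $Q\in D(F)$; applying $\pi$ forces $P=Q$, so the condition becomes: $P\in D(F)$ and $a_P=b_P$ in $L/O(P)$. By the description of the congruence $\equiv_{O(P)}$ in Remark~2.4, the equality $a_P=b_P$ is equivalent to $(a\rightarrow b)\otimes(b\rightarrow a)\in O(P)$, that is, $P\in V((a\rightarrow b)\otimes(b\rightarrow a))$. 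Hence
\[
\hat a^{-1}(D(F,b))=D(F)\cap V\big((a\rightarrow b)\otimes(b\rightarrow a)\big).
\]

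It then remains only to invoke the openness results already established: $D(F)$ is open in $\Spec(L)$ by Remark~2.13 (being $D$ of a filter), and $V(c)$ is open for every $c\in L$ by Lemma~3.8, so in particular $V((a\rightarrow b)\otimes(b\rightarrow a))$ is open. Their intersection is open in $\Spec(L)$, which gives continuity of $\hat a$; together with the projection identity this shows $\hat a\in\Gamma(\Spec(L),E_L)$, i.e. $\hat a$ is a global section.

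The argument is short and is essentially bookkeeping. The one step that needs care is identifying the preimage $\hat a^{-1}(D(F,b))$ correctly — in particular seeing that ``$a_P=b_Q$ for some $Q\in D(F)$'' collapses, after applying $\pi$, to ``$P\in D(F)$ together with $a\equiv_{O(P)}b$'' — and then recasting that congruence condition as membership in a set of the form $V(c)$ so that Lemma~3.8 applies. I do not anticipate any genuine obstacle beyond this identification.
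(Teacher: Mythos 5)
Your proposal is correct and follows essentially the same route as the paper: verify the projection identity, then compute $\hat a^{-1}(D(F,b))$ on basic opens and express it via $D(F)$ and the sets $V(\cdot)$, invoking Remark~2.13 and Lemma~3.8. The only (cosmetic) difference is that you keep the condition as $P\in V\bigl((a\rightarrow b)\otimes(b\rightarrow a)\bigr)$ directly, whereas the paper splits it into $V(a\rightarrow b)\cap V(b\rightarrow a)$; both are immediate from Lemma~3.8.
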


\begin{proof} First, $\pi(\hat{a}(P))=\pi(a_{P})=P$. Next we prove that $\hat{a}$ is continuous. Actually, for any $D(F, a)\in \mathcal{B},$ 
	${\hat{a}}^{-1}(D(F, a))=D(F)$, which is open in $\Spec(L)$. And for any $b\in L$, $b\neq a, D(F, b)\in \mathcal{B},$
	\begin{equation}
		\begin{aligned}
			{\hat{a}}^{-1}(D(F, b)) &=D(F)\bigcap \{P|a_{P}=b_{P}\}\\
			&=D(F)\bigcap \{P\in \Spec(L)|(a\rightarrow b)\otimes (b\rightarrow a)\in O(P)\}\\
			&=D(F)\bigcap  \{P\in \Spec(L)|a\rightarrow b\in O(P)\} \bigcap  \{P\in \Spec(L)|b\rightarrow a\in O(P)\}\\
			&=D(F) \bigcap V(a\rightarrow b)\bigcap V(b\rightarrow a)\\
		\end{aligned}
	\end{equation}	
\noindent By Remark~2.13 and Lemma~3.8, we know that ${\hat{a}}^{-1}(D(F, b))$ is open in $\Spec(L)$. 
\end{proof}
	
\begin{corollary} The functions $\hat{0}: \Spec(L)\longrightarrow E_{L}$ and $\hat{1}: \Spec(L)\longrightarrow E_{L}$ are global sections of $(E_{L}, \pi, \Spec(L))$.
\end{corollary}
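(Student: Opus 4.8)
The plan is to obtain this statement immediately from Proposition~3.12. That proposition asserts that for \emph{every} element $a\in L$ the map $\hat a\colon\Spec(L)\to E_L$, $P\mapsto a_P$, is a global section of $(E_L,\pi,\Spec(L))$; that is, $\hat a$ is continuous and satisfies $\pi(\hat a(P))=P$ for all $P\in\Spec(L)$. Since $0$ and $1$ are distinguished elements of the residuated lattice $L$, I would simply instantiate Proposition~3.12 at the parameter values $a=0$ and $a=1$. The functions $\hat 0$ and $\hat 1$ in the present statement are, by definition, exactly the maps $\hat a$ of Proposition~3.12 for these two choices of $a$, so they are global sections.

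There is essentially no obstacle here: the only things to note are that $0,1\in L$, hence they are legitimate choices of the parameter in Proposition~3.12, and that $\hat 0(P)=0/O(P)$ and $\hat 1(P)=1/O(P)$ are, by construction and Lemma~2.4, precisely the bottom element $0_P$ and the top element $1_P$ of the stalk $E_P=L/O(P)$. Consequently $\hat 0$ and $\hat 1$ coincide with the maps $\underline 0$ and $\underline 1$ that appear in Definition~3.1(5), which will be convenient when later checking that $(E_L,\pi,\Spec(L))$ satisfies all the axioms of a sheaf space of residuated lattices. No further computation is required beyond quoting Proposition~3.12.
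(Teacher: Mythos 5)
Your proposal is correct and matches the paper's (implicit) argument exactly: the corollary is an immediate instance of the proposition that $\hat{a}$ is a global section for every $a\in L$, specialized to $a=0$ and $a=1$, which is why the paper states it without proof. Only note that the proposition you invoke is numbered 3.11 in the paper (3.12 is the corollary itself); your additional remark that $\hat{0},\hat{1}$ coincide with $\underline{0},\underline{1}$ of Definition~3.1(5) is a correct and useful observation.
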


Let $E_{L}\vartriangle E_{L}=\bigcup\{E_{P}\times E_{P}: P\in$ Spec$(L)\}$ and equip  $E_{L}\vartriangle E_{L}$ with the subspace topology of the product space $E_{L}\times E_{L}$. It is well known that a base for the topology on $E_{L}\times E_{L}$ is $\mathcal{B}^{\prime}=\{D(F, a)\times D(G, b): F, G\in \mathcal{F}(L)$ and $a,b\in L\}$. Thus a base for the induced topology on $E_{L}\vartriangle E_{L}$ is given by $\mathcal{B}^{\prime\prime}=\{(B(a, b), F): F\in \mathcal{F}(L)$ and $a,b\in L\}$, where $(B(a, b), F)$ is the set $\{(a_{P}, b_{P}): P\in D(F)\}$.

\begin{proposition} For any $P\in \Spec(L)$, the functions $(a_{P}, b_{P})\longmapsto a_{P}\wedge_{P} b_{P}, (a_{P}, b_{P})\longmapsto a_{P}\vee_{P} b_{P}$, 
$ (a_{P}, b_{P})\longmapsto a_{P}\otimes_{P} b_{P}, (a_{P}, b_{P})\longmapsto a_{P}\rightarrow_{P} b_{P}$ from the set
	$\{(a_{P}, b_{P})\in E_{L}\times E_{L} |\pi(a)=\pi(b)\}$ into $E_{L}$ are continuous, where $P=\pi(a)=\pi(b)$.
\end{proposition}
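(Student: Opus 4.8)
The plan is to reduce continuity to a base‑open‑set computation and then reuse the ``shrink an open subset of $\Spec(L)$ to a basic $D(H)$'' device already used in the proofs of Theorem~3.9 and Proposition~3.11. Concretely: since $\mathcal{B}=\{D(G,c):G\in\mathcal{F}(L),\ c\in L\}$ is a base for $\mathcal{T}(\mathcal{B})$ on $E_L$ (Theorem~3.9) and $\mathcal{B}^{\prime\prime}$ is a base for $E_L\vartriangle E_L$, it suffices to show, for each operation $\circ\in\{\wedge,\vee,\otimes,\rightarrow\}$ and each basic open set $D(G,c)$, that the preimage of $D(G,c)$ under the map $m_\circ\colon(a_P,b_P)\mapsto a_P\circ_P b_P$ is open in $E_L\vartriangle E_L$; and for that it is enough to produce, around an arbitrary point of the preimage, a basic open neighbourhood from $\mathcal{B}^{\prime\prime}$ lying inside it. Throughout I would use the defining formula for the quotient operations, $a_P\circ_P b_P=(a\circ b)_P$, so that $m_\circ(a_P,b_P)=(a\circ b)_P$.

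So I would fix $\circ$ and $D(G,c)$, take a point $(a_P,b_P)$ of $m_\circ^{-1}(D(G,c))$ (every point of $E_L\vartriangle E_L$ has this shape, since each element of $L/O(P)$ is some $x_P$), and unwind what membership means: $P\in D(G)$ and $(a\circ b)_P=c_P$. By the definition of the congruence $\equiv_{O(P)}$, the second condition is precisely $t:=\big((a\circ b)\rightarrow c\big)\otimes\big(c\rightarrow(a\circ b)\big)\in O(P)$, i.e. $P\in V(t)$. Then I would set $W:=D(G)\cap V(t)$, which is open in $\Spec(L)$ by Lemma~3.8 and Remark~2.13 and contains $P$, and apply Remark~2.13 once more to obtain a filter $H$ with $P\in D(H)\subseteq W$. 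Finally $(B(a,b),H)\in\mathcal{B}^{\prime\prime}$ contains $(a_P,b_P)$ (because $P\in D(H)$), and for every $Q\in D(H)\subseteq W$ one has $Q\in D(G)$ and $t\in O(Q)$, hence $(a\circ b)_Q=c_Q$ and $m_\circ(a_Q,b_Q)=c_Q\in D(G,c)$; thus $(B(a,b),H)\subseteq m_\circ^{-1}(D(G,c))$, so the preimage is open. Running this for all four operations completes the proof.

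The only genuine content I anticipate is the identification of the ``agreement locus'' $\{Q\in\Spec(L):(a\circ b)_Q=c_Q\}$ with the open set $V(t)$ attached to the single element $t$: this is what makes $W$ open and what permits the $D(H)$‑shrinking step. Everything else — well‑definedness of $m_\circ$, the formula $a_P\circ_P b_P=(a\circ b)_P$, and the bookkeeping with the bases $\mathcal{B}$ and $\mathcal{B}^{\prime\prime}$ — is routine. One small point to verify along the way is that $(B(a,b),H)$ really is a neighbourhood of $(a_P,b_P)$ in the \emph{subspace} topology on $E_L\vartriangle E_L$, which holds because $P\in D(H)$ and $\mathcal{B}^{\prime\prime}$ is a base for that topology.
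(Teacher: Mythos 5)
Your proof is correct, and its core is the same computation the paper relies on: the image of the basic set $\{(a_Q,b_Q):Q\in D(H)\}$ under the operation is $\{(a\circ b)_Q:Q\in D(H)\}$, a basic open set downstairs. The difference is in how much of the neighbourhood bookkeeping is made explicit. The paper argues pointwise and only tests neighbourhoods of the image of the special form $D(F,a\otimes b)$, i.e.\ basic sets labelled by the representative $a\otimes b$ itself, for which $(B(a,b),F)$ maps straight into $D(F,a\otimes b)$; the reduction of an arbitrary basic neighbourhood $D(G,c)$ of $(a\otimes b)_P$ to one of this special form is left implicit (it follows from the base property established in the proof of Theorem~3.9). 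You instead show directly that $m_\circ^{-1}(D(G,c))$ is open for an arbitrary basic $D(G,c)$, by identifying the agreement locus $\{Q:(a\circ b)_Q=c_Q\}$ with $V\bigl(((a\circ b)\rightarrow c)\otimes(c\rightarrow(a\circ b))\bigr)$ and intersecting with $D(G)$ before shrinking to a $D(H)$ via Remark~2.13 and Lemma~3.8 --- exactly the device the paper uses in Theorem~3.9 and Proposition~3.11, but not restated in Proposition~3.13. So your argument is slightly longer but self-contained at this step, whereas the paper's is a one-liner that presupposes the earlier reduction; both are sound.
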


\begin{proof} We only prove the continuity of the operation $\otimes_{P}$. The proofs for the rest of the operations are similar.
	Let $(a_{P}, b_{P})\in E_{L}\vartriangle E_{L}$ and $D(F, a\otimes b)$ a neighbourhood of $(a\otimes_{P} b)_{P}$. Then $(B(a, b), F)$
	is a neighbourhood of $(a_{P}, b_{P})$, whose image by $\otimes_{P}$ is contained in $D(F, a\otimes b)$.
\end{proof}

\begin{theorem} For any residuated lattice $L$, $(E_{L}, \pi, \Spec(L))$ is a sheaf space of $L$.
\end{theorem}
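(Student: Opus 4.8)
The plan is to verify, one after another, the five conditions (1)--(5) of Definition~3.1 for the triple $(E_{L},\pi,\Spec(L))$; each of them has already been set up by a result proved above, so the argument amounts to assembling those pieces.

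For condition~(1), $\Spec(L)$ is a topological space under the Stone topology of Theorem~2.12 (with base $\{D(a)\}_{a\in L}$ by Theorem~2.14), and $E_{L}$ is a topological space under the topology $\mathcal{T}(\mathcal{B})$ generated by the base $\mathcal{B}=\{D(F,a):F\in\mathcal{F}(L),\ a\in L\}$, which is a legitimate base by Theorem~3.9. Condition~(2) is exactly Theorem~3.10, stating that $\pi$ is a local homeomorphism of $(E_{L},\mathcal{T}(\mathcal{B}))$ onto $\Spec(L)$. For condition~(3), fix $P\in\Spec(L)$; by the construction of $E_{L}$ we have $\pi^{-1}(\{P\})=E_{P}=L/O(P)$, and since $O(P)$ is a proper filter of $L$ by Proposition~2.15, Lemma~2.6 tells us that $(L/O(P),\sqcap,\sqcup,\odot,\rightharpoonup,0/O(P),1/O(P))$ is a residuated lattice. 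This fixes the stalk structure: in the notation of Definition~3.1 we take $\wedge_{P}=\sqcap$, $\vee_{P}=\sqcup$, $\otimes_{P}=\odot$, $\rightarrow_{P}=\rightharpoonup$, $0_{P}=0/O(P)$ and $1_{P}=1/O(P)$.

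Condition~(4) asks for continuity of the four binary operations defined on the fibred product $\{(e,e')\in E_{L}\times E_{L}:\pi(e)=\pi(e')\}$, which is precisely the space $E_{L}\vartriangle E_{L}$ introduced before Proposition~3.13; continuity of each of these maps is exactly Proposition~3.13, proved by checking that the relevant operation sends a basic neighbourhood $(B(a,b),F)$ of $(a_{P},b_{P})$ into the corresponding basic neighbourhood of the value in $E_{L}$. For condition~(5), I would observe that the maps $\underline{0},\underline{1}:\Spec(L)\to E_{L}$ of Definition~3.1 --- which send each $P$ to the zero and the one of $E_{P}$ --- coincide, by the explicit form of the stalk recorded in~(3), with the sections $\hat{0}$ and $\hat{1}$: indeed $\underline{0}(P)=0_{P}=0/O(P)=\hat{0}(P)$ and likewise $\underline{1}=\hat{1}$. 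Since $\hat{0}$ and $\hat{1}$ are global sections by Corollary~3.12 (a special case of Proposition~3.11), they are in particular continuous, which is what~(5) demands. Collecting~(1)--(5) gives the theorem.

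There is no genuine obstacle here: this theorem is the keystone that binds together Theorems~2.12, 2.14, 3.9 and~3.10, Lemma~2.6, Propositions~2.15 and~3.13, and Corollary~3.12. The one point deserving a word of care is the identification used for~(5) --- that the neutral elements of the quotient residuated lattice $L/O(P)$ are exactly the classes $0/O(P)$ and $1/O(P)$, so that the abstract ``zero and one sections'' $\underline{0},\underline{1}$ literally are the concrete sections $\hat{0},\hat{1}$ whose continuity was already established. Once the five conditions are in place, all the terminology and structure recorded in Remarks~3.2 and~3.4 --- total space, base space, stalks, and the residuated lattice $\Gamma(\Spec(L),E_{L})$ of global sections under pointwise operations --- applies to $(E_{L},\pi,\Spec(L))$ without change.
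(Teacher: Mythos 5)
Your proposal is correct and follows essentially the same route as the paper: verify the five conditions of Definition~3.1 by invoking that $O(P)$ is a proper filter (so each stalk $L/O(P)$ is a residuated lattice), together with Theorem~3.10 for the local homeomorphism, Proposition~3.13 for continuity of the operations, and Corollary~3.12 (via Proposition~3.11) for the zero and one sections. Your extra care in identifying $\underline{0},\underline{1}$ with $\hat{0},\hat{1}$ is a welcome but minor elaboration of what the paper leaves implicit.
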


\begin{proof} For any $P\in \Spec(L)$, $\pi^{-1}(\{P\})=L/O(P)$. And for any $P\in \Spec(L)$, $O(P)$ is a proper
	filter of $L$, thus $L/O(P)$ is a residuated lattice. By Theorem~3.10, Proposition~3.11, Corollary~3.12 and Proposition~3.13, we deduce that  $(E_{L}, \pi, \Spec(L))$ is a sheaf space of $L$.
\end{proof}

\begin{lemma}(\cite{cj}) If $F$ is a filter of a residuated lattices $L$ and $a\in L-F$, then there exists a prime filter $P$ of $L$  such that  $F\subseteq P$ and $a\notin P$.
\end{lemma}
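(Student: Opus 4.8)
The plan is a standard maximal‑filter argument via Zorn's lemma, followed by a verification of primality that rests on a single algebraic inequality in residuated lattices. Consider the family $\mathcal{S}=\{G\in\mathcal{F}(L):F\subseteq G\text{ and }a\notin G\}$, partially ordered by inclusion. It is nonempty since $F\in\mathcal{S}$, and any chain $\mathcal{C}\subseteq\mathcal{S}$ is in particular a directed family of filters, so by Remark~2.5 its union $\bigcup\mathcal{C}$ is again a filter; it contains $F$ and does not contain $a$, hence lies in $\mathcal{S}$ and bounds $\mathcal{C}$. By Zorn's lemma $\mathcal{S}$ has a maximal element $P$. Since $a\notin P$ we have $P\neq L$, so $P$ is a proper filter with $F\subseteq P$ and $a\notin P$; it remains only to show $P$ is prime.

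Suppose, for contradiction, that $x\vee y\in P$ while $x\notin P$ and $y\notin P$. I would use the standard description of the filter generated by a filter together with one extra element: since $P$ is $\otimes$-closed, $\langle P\cup\{x\}\rangle=\{z\in L:\ p\otimes x^{n}\leq z\text{ for some }p\in P,\ n\geq 1\}$, and similarly for $y$. Because $x\notin P$, the filter $\langle P\cup\{x\}\rangle$ strictly contains $P$ and still contains $F$, so maximality of $P$ forces $a\in\langle P\cup\{x\}\rangle$; thus $p\otimes x^{n}\leq a$ for some $p\in P$, $n\geq 1$, and likewise $q\otimes y^{m}\leq a$ for some $q\in P$, $m\geq 1$. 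Put $N=\max\{m,n\}$ and $r=p\otimes q\in P$. Since $\otimes$ is monotone and $x\leq 1$, powers are $\otimes$-decreasing, so $r\otimes x^{N}\leq p\otimes x^{n}\leq a$ and $r\otimes y^{N}\leq q\otimes y^{m}\leq a$.

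The heart of the proof is the inequality $(x\vee y)^{2N}\leq x^{N}\vee y^{N}$. Expanding the $2N$-fold product and using that $\otimes$ distributes over finite joins, $(x\vee y)^{2N}$ is the join of the terms $x^{i}\otimes y^{2N-i}$ with $0\leq i\leq 2N$; in each term either $i\geq N$, whence $x^{i}\otimes y^{2N-i}\leq x^{N}\otimes 1=x^{N}$, or $i<N$ so $2N-i\geq N$, whence $x^{i}\otimes y^{2N-i}\leq 1\otimes y^{N}=y^{N}$. Hence $(x\vee y)^{2N}\leq x^{N}\vee y^{N}$, and therefore
\[
r\otimes(x\vee y)^{2N}\ \leq\ r\otimes(x^{N}\vee y^{N})\ =\ (r\otimes x^{N})\vee(r\otimes y^{N})\ \leq\ a\vee a\ =\ a .
\]
But $r\in P$ and $x\vee y\in P$, so $r\otimes(x\vee y)^{2N}\in P$, and upward closure of $P$ gives $a\in P$, contradicting $a\notin P$. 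Thus no such $x,y$ exist, $P$ is prime, and $P$ is the required prime filter. The only subtle ingredient is the displayed power estimate, whose proof needs that $\otimes$ distributes over finite joins — a routine consequence of the residuation adjunction in condition~(3) of the definition. I expect this algebraic step, rather than the Zorn's lemma skeleton, to be the main obstacle to a fully detailed write-up.
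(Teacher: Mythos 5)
Your proof is correct and complete; note that the paper itself gives no proof of this lemma, citing it from Cretan--Jeflea \cite{cj}, so there is no in-paper argument to compare against. Your route is the standard one: Zorn's lemma on filters containing $F$ and omitting $a$ (the chain step is exactly Remark~2.5), then primality of the maximal element via the description $\langle P\cup\{x\}\rangle=\{z: p\otimes x^{n}\leq z,\ p\in P,\ n\geq 1\}$ and the estimate $(x\vee y)^{2N}\leq x^{N}\vee y^{N}$. The only ingredient you flag as ``subtle,'' distributivity of $\otimes$ over finite joins, is immediate from residuation: by condition~(3) of Definition~2.1 the map $-\otimes y$ is a left adjoint to $y\rightarrow -$, hence preserves all existing joins, so the expansion of $(x\vee y)^{2N}$ and the step $r\otimes(x^{N}\vee y^{N})=(r\otimes x^{N})\vee(r\otimes y^{N})$ are legitimate; alternatively, the inequality of Remark~2.7(1) already recorded in the paper yields the same power estimate by induction. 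So the argument stands as written, and it has the merit of making the paper's appeal to \cite{cj} self-contained using only facts already stated in Section~2.
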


\begin{proposition} $\bigcap \{P | P\in \Spec(L)\}=\{1\}$.
\end{proposition}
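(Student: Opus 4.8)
The plan is to prove the two inclusions separately, the nontrivial one being an immediate consequence of the prime filter separation result (Lemma~3.15).

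For the inclusion $\{1\}\subseteq\bigcap\{P\mid P\in\Spec(L)\}$: every filter of $L$ contains $1$, since a filter is nonempty and upward closed and $x\leq 1$ for all $x\in L$; in particular every prime filter contains $1$, so $1$ lies in the intersection. (If $\Spec(L)=\emptyset$ — which happens precisely when $L$ is the trivial one-element algebra — the empty intersection of subsets of $L$ is $L=\{1\}$ by convention, and the statement holds trivially; so we may assume $\Spec(L)\neq\emptyset$. Indeed, when $L$ is nontrivial, $\{1\}$ is a proper filter and $0\in L-\{1\}$, so Lemma~3.15 applied to $F=\{1\}$ and $a=0$ already produces a prime filter.)

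For the reverse inclusion $\bigcap\{P\mid P\in\Spec(L)\}\subseteq\{1\}$, I would show that any $a\in L$ with $a\neq 1$ fails to lie in the intersection. Since $\{1\}$ is a filter of $L$ and $a\in L-\{1\}$, Lemma~3.15 (with $F=\{1\}$) yields a prime filter $P$ with $\{1\}\subseteq P$ and $a\notin P$. Hence $a\notin\bigcap\{P\mid P\in\Spec(L)\}$, and the inclusion follows. Combining the two inclusions gives the claim.

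There is no real obstacle here: the only point requiring a word of care is the degenerate case $L=\{1\}$, which is absorbed by the usual convention on empty intersections, and everything else is a one-line appeal to the separation lemma already established.
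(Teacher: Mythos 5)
Your proposal is correct and takes essentially the same route as the paper: the forward inclusion is immediate since every (prime) filter contains $1$, and the reverse inclusion applies Lemma~3.15 with $F=\{1\}$ to separate any $a\neq 1$ from some prime filter. Your extra remark on the degenerate case $L=\{1\}$ (empty spectrum) is a minor refinement the paper's proof silently skips, but it does not change the argument.
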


\begin{proof} Clearly $\{1\}\subseteq\bigcap \{P | P\in \Spec(L)\}$. Conversely assume that $a\neq 1$, then by Lemma~3.15, there is a $P\in \Spec(L)$  such that  $a\notin P$. Thus $a\notin \bigcap \{P | P\in \Spec(L)\}$. Therefore $\bigcap \{P | P\in \Spec(L)\}\subseteq \{1\}$.
\end{proof}

For any $P\in \Spec(L)$, $O(P)$ is a subset of $P$ and $1\in O(P)$, thus the result below follows immediately.

\begin{corollary} $\bigcap \{O(P) | P\in \Spec(L)\}=\{1\}$.
\end{corollary}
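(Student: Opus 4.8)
The plan is to sandwich $\bigcap\{O(P)\mid P\in\Spec(L)\}$ between $\{1\}$ and $\{1\}$, so that equality is forced. Both inclusions are already essentially furnished by the results immediately preceding the corollary, so the work is purely to assemble them.

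For the inclusion $\{1\}\subseteq\bigcap\{O(P)\mid P\in\Spec(L)\}$, I would invoke Proposition~3.14, which states that $O(P)$ is a (proper) filter of $L$ for every $P\in\Spec(L)$; in particular $1\in O(P)$ for each such $P$ (indeed $1\vee 0=1$ with $0\in L-P$ since $P$ is proper). Hence $1$ belongs to every member of the family, so it lies in their intersection.

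For the reverse inclusion, I would again use Proposition~3.14, this time the assertion $O(P)\subseteq P$ for every $P\in\Spec(L)$. Since intersection is monotone with respect to inclusion applied termwise, this gives $\bigcap\{O(P)\mid P\in\Spec(L)\}\subseteq\bigcap\{P\mid P\in\Spec(L)\}$, and by Proposition~3.16 the right-hand side equals $\{1\}$. Combining the two inclusions yields the claimed equality.

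There is no real obstacle: the genuine content is hidden one step back, in Lemma~3.15 (every filter can be separated from an element outside it by a prime filter), which is what makes Proposition~3.16 true; once that is in hand, the corollary is the one-line deduction indicated by the remark just above its statement. I would therefore keep the write-up to exactly these two short inclusions.
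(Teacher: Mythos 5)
Your argument is exactly the paper's: the corollary follows immediately from $1\in O(P)$ and $O(P)\subseteq P$ for every $P\in\Spec(L)$, combined with $\bigcap\{P\mid P\in\Spec(L)\}=\{1\}$ (Proposition~3.16). The only slip is a citation label --- the fact that $O(P)$ is a proper filter contained in $P$ is Proposition~2.15 in the paper, not Proposition~3.14 --- which does not affect the correctness of the proof.
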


\begin{theorem} If $L$ is a residuated lattice, then the family $\{O(P)\}_{P\in \Spec(L)}$ canonically determines a sheaf representation of $L$.
\end{theorem}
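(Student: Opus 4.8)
The plan is to exhibit the required injective morphism explicitly as $a\mapsto\hat a$, where $\hat a$ is the global section of the sheaf space $(E_L,\pi,\Spec(L))$ produced in Proposition~3.11. Concretely, define $\phi:L\longrightarrow\Gamma(\Spec(L),E_L)$ by $\phi(a)=\hat a$. By Theorem~3.14 the triple $(E_L,\pi,\Spec(L))$ is indeed a sheaf space of $L$, and by Proposition~3.11 together with Corollary~3.12 each $\hat a$ (in particular $\hat 0$ and $\hat 1$) is a global section, so $\phi$ is well defined into $\Gamma(\Spec(L),E_L)$.

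Next I would check that $\phi$ is a residuated lattice morphism. Since the operations on $\Gamma(\Spec(L),E_L)$ are defined pointwise (Remark~3.4) and the operations on each stalk $E_P=L/O(P)$ are the quotient operations (Lemma~2.5), for every $P\in\Spec(L)$ we have $\widehat{a\otimes b}(P)=(a\otimes b)_P=(a\otimes b)/O(P)=a/O(P)\odot b/O(P)=\hat a(P)\otimes_P\hat b(P)=(\hat a\otimes\hat b)(P)$, hence $\phi(a\otimes b)=\phi(a)\otimes\phi(b)$; the same argument applies verbatim to $\wedge,\vee,\rightarrow$, and $\phi(0)=\hat 0=\underline 0$, $\phi(1)=\hat 1=\underline 1$ by the definition of $0_P,1_P$ in $L/O(P)$. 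So $\phi$ is a residuated lattice morphism.

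Finally I would prove injectivity, which is where the family $\{O(P)\}_{P\in\Spec(L)}$ is used in an essential (``canonical'') way. Suppose $\phi(a)=\phi(b)$, i.e.\ $a_P=b_P$ in $L/O(P)$ for every $P\in\Spec(L)$; equivalently $(a\rightarrow b)\otimes(b\rightarrow a)\in O(P)$ for all $P\in\Spec(L)$. By Corollary~2.17 (the intersection $\bigcap\{O(P):P\in\Spec(L)\}=\{1\}$) we get $(a\rightarrow b)\otimes(b\rightarrow a)=1$. Now in any residuated lattice $x\otimes y\leq x\otimes 1=x$ and $x\otimes y\leq 1\otimes y=y$, so $x\otimes y\leq x\wedge y$; applying this with $x=a\rightarrow b$, $y=b\rightarrow a$ yields $a\rightarrow b=1$ and $b\rightarrow a=1$, i.e.\ $a\leq b$ and $b\leq a$, so $a=b$. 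Hence $\phi$ is injective, and $\phi:L\longrightarrow\Gamma(\Spec(L),E_L)$ is a sheaf representation of $L$ in the sense of Definition~3.6.

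I do not expect a serious obstacle here, since all the components have been assembled in the preceding results; the only point requiring a little care is the elementary observation $x\otimes y=1\Rightarrow x=y=1$ used in the injectivity step, and the bookkeeping that the pointwise structure on $\Gamma(\Spec(L),E_L)$ matches the quotient structure on the stalks so that $\phi$ is a morphism on the nose.
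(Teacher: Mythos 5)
Your proposal is correct and follows essentially the same route as the paper: define $\varphi(a)=\hat a$, verify the morphism property pointwise via the quotient operations on the stalks $L/O(P)$, and obtain injectivity from $\bigcap\{O(P):P\in\Spec(L)\}=\{1\}$ (which is Corollary~3.17 in the paper, not 2.17); your extra remark that $x\otimes y=1$ forces $x=y=1$ just makes explicit a step the paper leaves implicit.
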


\begin{proof} Define $\varphi: L\longrightarrow \Gamma(\Spec(L), E_{L})$ by $\varphi(a)=\hat{a}$. We only prove that
	for any $a,b\in L, \varphi(a\otimes b)=\varphi(a)\otimes_{P} \varphi(b)$. The proofs for rest of the operations are similar.
	For any $P\in \Spec(L)$, $\varphi(a\otimes b)(P)=(\widehat{a\otimes b})(P)=a\otimes b/O(P)=a/O(P)\otimes_{P} b/O(P)=\hat{a}(P)\otimes_{P}
	\hat{b}(P)=\varphi(a)(P)\otimes_{P} \varphi(b)(P)=(\varphi(a)\otimes_{P} \varphi(b))(P)$. Thus $\varphi(a\otimes b)=\varphi(a)\otimes_{P} \varphi(b)$.
Next, we prove that the mapping $\varphi$ is injective. Assume that $\varphi(a)=\varphi(b)$. Then for any $P\in \Spec(L)$, $a_{P}=b_{P}$. Thus $(a\rightarrow b)\otimes (b\rightarrow a)\in \bigcap \{O(P) | P\in \Spec(L)\}=\{1\}$, i.e. $a=b$.
\end{proof}

\begin{problem}
For what $L$, is the mapping $\varphi$ surjective?	
\end{problem}

\section{Conclusions and future work}

~~~~~In this paper, we investigate the properties of the family of all the prime filters of residuated lattices. Based on this, we construct the sheaf space of residuated lattices and obtain a sheaf representation of residuated lattices.

In \cite{fl}, Ferraioli and Lettieri took the primary ideals as the corresponding ideals of the prime ideals and proved that every $\MV$-algebra and the $\MV$-algebra of all global sections of its sheaf space are isomorphic. In \cite{mg,gg},  the scholars proved every $\MV$-algebra $A$ is isomorphic to the $\MV$-algebra of global sections of a sheaf $F$ of $\MV$-algebras with stalks that are linear. In our future work, we will investigate when these results hold for a residuated lattice, specifically, for what residuated lattice $L$, the mapping $\varphi: L\longrightarrow \Gamma(\Spec(L), E_{L})$ is surjective. For example, is $\varphi$ surjective for any Heyting algebra $L$?

\bibliographystyle{./entics}

\end{document}